\documentclass[11pt]{amsart}
\usepackage{texmaX,semmaX,semtkzX}
\usepackage{makecell}
\usepackage{multirow}

\usepackage{booktabs} 
\usepackage{mathtools}
\usepackage{listings}

\usepackage{verbatim}  
\usepackage{enumitem}

\usepackage{hyperref}

\usepackage{lmodern}
\usepackage{minted} 

\usepackage[margin=1in]{geometry}

\usepackage{amsthm}
\newtheorem*{unnumberedquestion}{Question}

\DeclareFontFamily{U}{wncy}{}
\DeclareFontShape{U}{wncy}{m}{n}{<->wncyr10}{}
\DeclareSymbolFont{mcy}{U}{wncy}{m}{n}
\DeclareMathSymbol{\Sha}{\mathord}{mcy}{"58}

\usepackage{tikz}
\usetikzlibrary{arrows.meta}
\usetikzlibrary{arrows.meta}
\usetikzlibrary{positioning, arrows.meta}

\usepackage[utf8]{inputenc}

\usepackage[table]{xcolor}

\usepackage{placeins}

\usepackage{float}
\usepackage{caption}

\usepackage{xcolor}
\usepackage{ulem}

\begin{document}

\title[BSD twins of elliptic curves]{Infinitely many pairs of non-isomorphic elliptic curves sharing the same BSD invariants}

\author{Asuka Shiga}
\address{Mathematical Institute, Graduate School of Science, Tohoku University, 6-3 Aramaki Aza Aoba, Aoba-ku, Sendai, Miyagi 980-8578, Japan.}
\email{otheiio323.com@gmail.com}

\begin{abstract}{Let \(E/\mathbb{Q}\) be an elliptic curve. The Birch and
Swinnerton--Dyer (BSD) conjecture relates the leading coefficient of the
Taylor expansion of the \(L\)-function of \(E/\mathbb{Q}\) at \(s=1\) to
arithmetic invariants of \(E\), such as its Mordell--Weil group, its
Tate--Shafarevich group, its Tamagawa numbers, its regulator, and its
real period. We call two non-isomorphic elliptic curves over \(\mathbb{Q}\) BSD twins if
they have the same \(L\)-function and the same arithmetic data underlying the
BSD invariants appearing in the BSD conjecture, with the
Mordell--Weil group and the Tate--Shafarevich group compared as groups. We exhibit a family of BSD twins for which the corresponding
pairs of \(j\)-invariants are pairwise distinct. We further prove that, even after imposing equality of the Kodaira symbols at every prime and equality of the minimal discriminants, infinitely many BSD twins still exist.
}

\end{abstract}



\maketitle

\tableofcontents

\section{Introduction}\label{sec1}

\subsection{The Question}

Let $E/\mathbb{Q}$ be an elliptic curve defined over $\mathbb{Q}$. We define the following 6-tuple as Birch and Swinnerton-Dyer (BSD) data:

\[
\mathrm{BSD}(E/\mathbb{Q}) \stackrel{\mathrm{def}}{=}
\bigl(L(E,s),\,E(\mathbb{Q}),\,\mathrm{Reg}(E/\mathbb{Q}),\,\Omega_E,\, (c_{E/\mathbb{Q}_p})_p,\,\Sha(E/\mathbb{Q})\bigr).
\]

Here, the definition of each component is as follows.
\begin{itemize}

\item $L(E,s)$: The $L$-function of $E/\mathbb{Q}$.

\item  $E(\mathbb{Q})$: The Mordell--Weil group of $E/\mathbb{Q}$, which is isomorphic to $\mathbb{Z}^{\text{rank}(E/\mathbb{Q})}\times E(\mathbb{Q})_{\text{tor}}$. 

\item $\text{Reg}(E/\mathbb{Q})$: The regulator of $E/\mathbb{Q}$.

\item $\Omega_{E}$: The real period of $E$, given by $\Omega_E = \int_{E(\mathbb{R})} |\omega|$ where $\omega$ is the invariant differential of a global minimal model of $E/\Bbb{Q}$. 

\item $c_{E/\Bbb{Q}_p}$: The Tamagawa number at a prime number $p$.

\item
$\Sha(E/\Bbb{Q})$: The Tate-Shafarevich group of $E/\Bbb{Q}$, conjecturally finite.

\end{itemize}

Let $\mathrm{rank}_{\mathrm{an}}(E/\Bbb{Q})=\operatorname{ord}_{s=1}L(E,s)$ be analytic rank of $E/\Bbb{Q}$. The weak form of the BSD
conjecture predicts that $\mathrm{rank}_{\mathrm{an}}(E/\Bbb{Q})=\operatorname{rank}(E/\mathbb{Q})$. The full BSD conjecture further predicts the finiteness of
\(\Sha(E/\mathbb{Q})\) and the leading coefficient formula
\[
\lim_{s\to 1}\frac{L(E,s)}{(s-1)^r}
=
\frac{
\Omega_E\,\operatorname{Reg}(E/\mathbb{Q})\,
\#\Sha(E/\mathbb{Q})\prod_p c_{E/\mathbb{Q}_p}
}{
\#E(\mathbb{Q})_{\mathrm{tor}}^2
},
\qquad r=\operatorname{rank}E(\mathbb{Q}).
\]

When $E_1 \cong E_2$ over $\mathbb{Q}$, we have $\text{BSD}(E_1/\mathbb{Q}) = \text{BSD}(E_2/\mathbb{Q})$. Here, “=” between groups means that the groups are isomorphic as groups.

We investigate the following question.

\begin{unnumberedquestion}
When $\text{BSD}(E_1/\mathbb{Q}) = \text{BSD}(E_2/\mathbb{Q})$, does $E_1 \cong E_2$ over $\mathbb{Q}$ hold?
\end{unnumberedquestion}

Except where explicitly stated otherwise, we do not assume the weak BSD
conjecture, the full BSD conjecture, or the finiteness of the Tate--Shafarevich groups.

Having the same $L$-function is equivalent to being isogenous over $\mathbb{Q}$ by Faltings's theorem; see \cite[Chapter~V, Section~4, Corollary~4.2]{milne}. Thus, we should choose $(E_1,E_2)$ to be from the same isogeny class over $\mathbb{Q}$.

It is known that the quantity \[\frac{\Omega_E \cdot \text{Reg}(E/\mathbb{Q}) \cdot \# \Sha(E/\mathbb{Q}) \cdot \prod_p c_{E/\Bbb{Q}_p} }{ \# E(\mathbb{Q})_{\text{tor}}^2}\] remains invariant under isogeny under the assumption that $\Sha(E/\Bbb{Q})$ is finite (\cite{Tate}, Theorem 2.1), though none of the individual terms in the product need be the same for isogenous curves.

Even when the isogeny class contains more than one elliptic curve, this question has an affirmative answer in the following example.

\begin{proposition}
There exists a unique elliptic curve $E$ defined over $\Bbb{Q}$ that has the following BSD data. 
 \FloatBarrier
  \begin{table}[htbp]
    \centering
    \label{tab:elliptic_curves_38025}
    \begin{tabular}{ll}
      \toprule
      \textbf{Elliptic curve} & $E$ \\
      \midrule
      $L$-function & \(\prod_{p\neq 2,17}\bigl(1-a_p\,p^{-s}+p^{\,1-2s}\bigr)^{-1},\quad a_p=p+1-\#E(\mathbb{F}_p), \text{where} \ E: y^2=x^3+17x.\) \\
      Mordell--Weil group & \(\mathbb{Z}/2\mathbb{Z}\) \\
      Regulator & 1 \\
      Real period & $1.8261\cdots$ \\
      Tamagawa number & \(1\,(p=2),\; 2\,(p=17)\) \\
      \(\Sha(E/\mathbb{Q})\) & \(\mathbb{Z}/2\mathbb{Z}\times \mathbb{Z}/2\mathbb{Z}\) \\
      \bottomrule
    \end{tabular}
  \end{table}
  \FloatBarrier

\end{proposition} 
\begin{proof}
A direct computation shows that \(E: y^2=x^3+17x\) has the data shown in the table. Let \(E'\) be another elliptic curve over \(\Bbb{Q}\) that has the data shown in the table (for the computation of the Tate--Shafarevich group, see \cite{Rubin}). Then \(E\) and \(E'\) are isogenous over \(\Bbb{Q}\) since they share the same \(L\)-function. Then \(E'\) is \(2\)-isogenous to \(E\) over \(\Bbb{Q}\) (by the isogeny graph of LMFDB \cite{lmfdb}), so \(E'\) is isomorphic to \(y^2=x^3-68x\). However, \(\Sha(E'/\Bbb{Q})[2]=0\). Thus, the curve is unique, namely \(E\).\end{proof}

This example shows that, even if $E$ has another isogenous pair $E'$, we can distinguish $E$ and $E'$ by the BSD data. 
Can elliptic curves in general be uniquely characterized by the BSD data?

\subsection{Main results}

We call a pair $(E_1,E_2)$ of non-isomorphic elliptic curves over $\mathbb{Q}$ a pair of BSD twins if
\[
\mathrm{BSD}(E_1/\mathbb{Q})=\mathrm{BSD}(E_2/\mathbb{Q}).
\]

We prove two complementary infinitude results for BSD twins.

For a square-free integer $D$, let $E^D$ denote the quadratic twist of an elliptic curve
$E/\mathbb{Q}$ by $D$. Although \(E^D\) is isomorphic to \(E\) over \(\overline{\mathbb{Q}}\), quadratic twisting can change the arithmetic of \(E\) over \(\mathbb{Q}\). We exploit this flexibility to construct infinitely many pairs of elliptic curves with the same BSD data.

First, we exhibit a family of geometrically distinct \(2\)-isogenous
starting pairs \((E_m,E_m')\). For each such pair satisfying the parity
hypothesis in Theorem~\ref{distinctj}, one can produce infinitely many
quadratic twists \((E_m^D,E_m'^D)\) sharing the same BSD invariants. Moreover,
for each starting pair one has \(j(E)\neq j(E')\), and the pairs of
\(j\)-invariants $\bigl(j(E_m),j(E_m')\bigr)$ are pairwise distinct as the starting pair varies.

Second, even after imposing the additional requirement that the finer
local invariants given by the Kodaira symbols and the minimal discriminant
also agree, there still exist infinitely many BSD twins over
\(\mathbb Q\) (Theorem~\ref{main}). As explained below, this condition is
extremely rare geometrically: in the \(2\)-isogenous discriminant
twin setting, the relevant starting pair is unique up to quadratic twist.

We first construct a family of pairs $(E_m,E'_m)$ whose
pairs of $j$-invariants are distinct as $m$ varies.

\begin{example}[A family of starting pairs]\label{startingpair}

Let \(m\) be either \(1\) or a prime satisfying
$\left(\frac{390}{m}\right)=-1$, and set \(n\coloneqq m^2+64\).

Let $E_m$ and $E'_m$ be the elliptic curves defined by
\[
E_m:\ y^2=x^3+390n x^2+195^2m^2n\,x, 
E'_m:\ y^2=x^3-195n x^2+16\cdot 195^2n\,x. 
\qquad
\]

We have $j(E_m)\neq j(E'_m)$. 
\end{example}

We then prove that, for infinitely many square-free integers $D$, the pairs
$(E_m^D,E_m^{\prime D})$ obtained by applying the same quadratic twist to both
curves are the desired BSD twins.

\begin{theorem}\label{distinctj}(=Theorem \ref{j})

Take $E_m,E'_m$ as in Example \ref{startingpair}. If $\mathrm{rank}_{an}(E_m/\mathbb{Q})\equiv 0 \bmod 2$ (for example, $m=7,19,23,37,43, 53$), then there exist infinitely many square-free integers $D$ such that the pair $(E^D_m,E'^D_m)$ satisfies
\[
j(E^D_m)\neq j(E'^D_m)
\quad\text{and}\quad
\operatorname{BSD}(E^D_m/\mathbb{Q})=\operatorname{BSD}(E'^D_m/\mathbb{Q}),
\]
and the pairs $(j(E^D_m),j(E'^D_m))=(j(E_m),j(E_m'))$ are pairwise distinct if we vary $m$.

\end{theorem}

However, except for the case \(m=1\), the above pairs \((E_m,E'_m)\)
can still be distinguished by finer local data not captured by the Tamagawa numbers,
namely by their Kodaira symbols or their minimal discriminants.

Even when all Tamagawa numbers coincide, the pairs need not have the same
reduction types at the bad primes. In the present family, one has
\[
\frac{\Delta_{E_m}}{\Delta_{E'_m}}=m^2.
\]
Hence the minimal discriminants coincide only in the special case \(m=1\).

In \cite{twin}, pairs of 2-isogenous elliptic curves over $\mathbb{Q}$ with the same minimal discriminants are called 2-isogenous discriminant twins, and examples of these are classified in Table 5 of \cite{twin}. This pair is unique up to quadratic twist. Indeed,

\begin{theorem}[\cite{twin}]\label{discriminanttwins}
Among pairs of elliptic curves over \(\mathbb{Q}\) connected by a
\(2\)-isogeny defined over \(\mathbb{Q}\), the pair with LMFDB labels
\(4225.h1\) and \(4225.h2\) is, up to quadratic twists, the unique pair of
discriminant twins over \(\mathbb{Q}\).
\end{theorem}

Although the starting pair is unique up to isomorphism over $\bar{\Bbb{Q}}$, 
quadratic twisting produces infinitely many BSD twins over \(\mathbb{Q}\) that
remain indistinguishable even by their minimal discriminants and Kodaira symbols.

\begin{theorem}\label{main} (=Theorem \ref{maintheorem})
There exist infinitely many pairs $(E_1,E_2)$ of non-isomorphic elliptic curves over $\mathbb{Q}$ such that $j(E_1)\neq j(E_2)$, $\text{BSD}(E_1/\mathbb{Q})=\text{BSD}(E_2/\mathbb{Q})$, and the Kodaira symbols at every prime and the minimal discriminants are the same.
\end{theorem}

In this theorem, we use the pair in Example~\ref{startingpair} corresponding to \(m=1\), the quadratic twist of $(4225.h1,4225.h2)$ by $-195$. The relationship between two main theorems (Theorem \ref{distinctj} and Theorem \ref{main}) is as follows.

\FloatBarrier

\begin{figure}[htbp]
\centering
\begin{tikzpicture}[
  >=Latex,
  every node/.style={align=center},
  box/.style={
    draw=black,
    rectangle,
    rounded corners=2pt,
    thick,
    font=\scriptsize,
    inner xsep=4pt,
    inner ysep=3pt,
    minimum height=8mm
  },
  leftbox/.style={
    box,
    text width=31mm
  },
  rightbox/.style={
    box,
    text width=35mm
  },
  machine/.style={
    draw=black,
    rectangle,
    rounded corners=3pt,
    very thick,
    font=\scriptsize,
    inner xsep=5pt,
    inner ysep=5pt,
    minimum width=25mm,
    minimum height=15mm
  },
  inclusion/.style={
    font=\fontsize{16}{16}\selectfont
  },
  mainarrow/.style={
    ->,
    very thick,
    shorten >=2pt,
    shorten <=2pt
  },
  auxarrow/.style={
    ->,
    dashed,
    thick,
    shorten >=2pt,
    shorten <=2pt
  }
]

\node[leftbox] (family) at (0,0.95)
  {Abundant family\\
   (Example~\ref{startingpair})};

\node[leftbox] (disc) at (0,-0.95)
  {Very rare \(2\)-isogenous\\
   discriminant twins\\
   (Example~\ref{example})};

\node[inclusion, rotate=90] at (0,0.1) {$\subset$};

\node[machine] (machine) at (3.75,0)
  {\textbf{Theorem~\ref{thm:twists}}\\
   twisting machine};

\node[rightbox] (bsd) at (7.55,0.95)
  {BSD twins\\
   (Theorem~\ref{distinctj})};

\node[rightbox] (strongbsd) at (7.55,-0.95)
  {BSD twins\\
   \(+\) same Kodaira symbols\\
   \(+\) same minimal discriminant\\
   (Theorem~\ref{main})};

\draw[auxarrow]
  (family.east) -- ([yshift=3mm]machine.west);

\draw[mainarrow]
  (disc.east) -- ([yshift=-3mm]machine.west);

\draw[auxarrow]
  ([yshift=5mm]machine.east) -- (bsd.west);

\draw[mainarrow]
  ([yshift=-5mm]machine.east) -- (strongbsd.west);

\end{tikzpicture}

\caption{
Theorem~\ref{thm:twists} acts as a twisting machine. Applied to the
abundant family in Example~\ref{startingpair}, it produces BSD twins over
\(\mathbb{Q}\). Applied to the very rare discriminant twins in
Example~\ref{example}, it produces infinitely many pairs over \(\mathbb{Q}\)
for which the BSD invariants, the Kodaira symbols, and the minimal discriminants
all coincide.
}
\end{figure}

\FloatBarrier

\FloatBarrier

Finally, we explain the related work relevant to the present study.

In the high-dimensional case, Jamie Bell proved the existence of pairs of abelian varieties of dimension $p-1$ (where the class number of $\mathbb{Q}(\zeta_p)$ is not $1$, so $p \geq 23$), defined over $\mathbb{Q}$, such that over every number field they share the same BSD invariants except possibly for the real period whose
equality is conditional on the BSD conjecture, the same \(n\)-Selmer
groups for all \(n\), and the same Tate modules \cite{Bell}. The pair becomes isomorphic over $\overline{\mathbb{Q}}$.
For this pair of abelian varieties, for every prime \(\ell\), there exists
an isogeny of some degree \(q\) with \((q,\ell)=1\). However, for two non-isomorphic elliptic curves over $\mathbb{Q}$, there cannot exist two or more isogenies of coprime degrees. Therefore, we need to examine the $q$-primary component of the invariants in the case of elliptic curves connected by prime degree $q$ isogenies.

An analogous phenomenon is known for number fields: Sutherland showed that there are infinitely many pairs of non-isomorphic solvably equivalent number fields (\cite{Sutherland}); in particular, all invariants appearing in the analytic class number formula coincide for such pairs.

\subsection{Outline of the proof}
First, Proposition \ref{sufficient} provides a sufficient condition for a non-isomorphic, 2-isogenous pair of elliptic curves to share all six BSD invariants.

Some of the pairs in Example~\ref{startingpair} satisfy this condition; in particular, the case \(m=1\) recovers Example~\ref{example}. This example is special, since the two curves also have the same Kodaira symbols at every prime and the same minimal discriminant. Applying quadratic twists then yields infinitely many further pairs.

To align the Tate–Shafarevich groups, we simultaneously trivialize their 2-torsion subgroups. By Proposition \ref{sha}, the entire Tate–Shafarevich groups therefore coincide. 

To obtain such twists, we present two complementary approaches: (i) an application of Alexander Smith’s density theorems, and (ii) an explicit construction of a twisting parameter $D$ via a $2$-descent. Using approach (ii), we also give explicit examples of pairs of elliptic curves sharing the BSD invariants, Kodaira symbols and minimal discriminants in the last section.

The logical flow of each section is roughly as follows. The parts enclosed in $\square$ are the main logic flow.

\usetikzlibrary{positioning, arrows.meta, calc, fit}
\begin{figure}[htbp]
\centering
\begin{tikzpicture}[
  node distance = 1.0cm and 1.4cm,
  every node/.style = {font=\footnotesize},
  box/.style   = {draw, rounded corners, inner sep=2pt, minimum height=4mm},
  plain/.style = {inner sep=1pt},
  edge/.style  = {-{Stealth[length=2.2mm]}, line width=0.35pt, shorten >=2pt, shorten <=2pt},
]

  \node[plain] (A) {Proposition~\ref{sha}};
  \node[box,   right=1.00cm of A] (C) {Proposition~\ref{sufficient}};
  \node[box,   right=1.00cm of C] (E) {Example~\ref{example}};
  \node[box,   below left=0.85cm and 0.55cm of E] (T) {Proposition~\ref{trueexample}};
  \node[plain, below=of A] (B) {Section~5};

  \node[box, right=3.20cm of T] (J) {Theorem~\ref{distinctj}};

  \def\RightGap{1.10cm}
  \def\Vgap{0.15cm}

  \node[plain, above=0.55cm of E] (D) {Proposition~\ref{twistsha}};
  \node[box,   right=\RightGap of D] (H1) {Theorem~\ref{thm:twists}};
  \node[plain, below=\Vgap of H1] (OR) {\textit{or}};
  \node[box,   below=\Vgap of OR]  (H2) {Proposition~\ref{pair}};
  \node[box,   right=\RightGap of H1] (F) {Theorem~\ref{main}};
  \node[plain, left=1.80cm of D] (G) {Theorem~\ref{smith}};

  \draw[edge] (A.east) -- (C.west);
  \draw[edge] (B.east) -- (C.south west); 
  \draw[edge] (C.east) -- (E.west);

  \draw[edge, dashed] (T.north east) -- (E.south west);

  \draw[edge] (T.east) -- node[above] {$+\,$Theorem~\ref{thm:twists}} (J.west);

  \draw[edge] (G.east) -- (D.west);
  \draw[edge] (D.east) -- (H1.west);

  \coordinate (ORmid) at ($ (H1.south east)!0.5!(H2.north east) $);
  \draw[edge] ($(ORmid)+(0.2cm,0)$) -- (F.west);

  \draw[edge] (E.south east) 
      .. controls ($(H2.south west)+(0,-1.3cm)$) and ($(H2.south east)+(0,-0.40cm)$) .. 
      (F.south west);

\end{tikzpicture}
\end{figure}

\section{Notation}
\subsection{Notation}
\begin{itemize}

\item  For an abelian group $A$ and a prime number $p$,
    {\setlength{\jot}{1pt}
    \[
      \begin{aligned}
        A[p]          &:= \{\,a \in A \mid pa = 0\} 
                      &&\text{($p$‑torsion subgroup)}\\
        A[p^{\infty}] &:= \{\,a \in A \mid \exists\,n\ge 1,\; p^{n}a = 0\} 
                      &&\text{($p$‑primary component)}
      \end{aligned}
    \]}

\item If \(E/\mathbb Q\) is given by a specific integral Weierstrass equation,
we denote by \(\Delta^{\mathrm{eq}}_{E}\) the discriminant of that displayed equation.
We denote by \(\Delta_{E}\) the minimal discriminant of \(E/\mathbb Q\).

\item $j(E)$: $j$-invariant of elliptic curve $E$.
\item $E^D/\mathbb{Q}$: For the Weierstrass model $E/\mathbb{Q}: y^2=x^3+ax^2+bx+c$, the quadratic twist of $E/\mathbb{Q}$ by a square-free integer $D$ is an elliptic curve defined by the equation \[E^D: y^2=x^3+aDx^2+bD^2x+cD^3.\]    

\item For an isogeny $\phi: E_1\to E_2$ between elliptic curves, $\hat{\phi}: E_2\to E_1$ denotes the dual isogeny of $\phi$.

\item For a quadratic twist $E^D$ of an elliptic curve $E$ and a $2$-isogeny $\phi: E_1 \to E_2$ defined over $\Bbb{Q}$, we also denote by $\phi$ the induced $2$-isogeny $\phi_D: E_1^D \to E_2^D$ defined over $\Bbb{Q}$. Accordingly, we abbreviate the $\phi_D$-Selmer group $\Sel^{\phi_D}(E_1^D/\mathbb{Q})$ to
$\Sel^{\phi}(E_1^D)$.

\item Let \(\phi:E_1\to E_2\) be a fixed \(2\)-isogeny, let \(D\) be a
square-free integer, and let \(v\) be a place of \(\mathbb Q\). We set
\[
W_v(\phi,D):=
\ker\left(
H^1(\mathbb Q_v,E_1^D[\phi])
\longrightarrow H^1(\mathbb Q_v,E_1^D)
\right)
=
E_2^D(\mathbb Q_v)/\phi(E_1^D(\mathbb Q_v)).
\]
Thus the \(\phi\)-Selmer group of \(E_1^D\) over \(\mathbb Q\) is
\[
\Sel^\phi(E_1^D)
=
\left\{
\xi\in H^1(\mathbb Q,E_1^D[\phi])
:\operatorname{res}_v(\xi)\in W_v(\phi,D)
\text{ for every place }v
\right\}.
\]
We also define the Tamagawa ratio by
\[
\tau_D:=
\frac{\#\Sel^\phi(E_1^D)}
{\#\Sel^{\hat\phi}(E_2^D)}.
\]

\item $\mathrm{rank}_{an}(E/\mathbb{Q})\coloneqq\mathrm{ord}_{s=1}L(E, s)$: analytic rank of $E/\mathbb{Q}$.     
\item $\mathbb{Q}(E[2])$: the minimal extension of $\mathbb{Q}$ containing every point in $E(\bar{\Bbb{Q}})[2]$.

\item $v_p(\cdot)$: For a prime number $p$, $v_p$ denotes the $p$-adic valuation.  

\item For a finite extension \(F/K\) of local or number fields, 
\(N_{F/K}:F^\times\to K^\times\) denotes the norm map.

\item For an odd prime \(p\) and an integer \(a\), \(\left(\frac{a}{p}\right)\) denotes the Legendre symbol.

\item $\Omega_E$: the real period of $E/\Bbb{Q}$.
\item  $c_{E/\mathbb Q_p}$: the Tamagawa number of $E$ at $p$.

\end{itemize}

\section{Simultaneous trivialization of \texorpdfstring{$2$}{2}-torsion subgroups of the Tate-Shafarevich groups of two elliptic curves related via a $2$-isogeny}

We begin by defining what it means for a $2$-isogeny to be balanced. The aim of
this section is to simultaneously control the Tate--Shafarevich groups of a pair
of elliptic curves connected by such an isogeny.

\begin{definition}
A $2$-isogeny $\phi: E_1\to E_2$ defined over $\Bbb{Q}$ is called balanced if it satisfies $\Bbb{Q}(E_1[2])=\Bbb{Q}(E_2[2])$.
\end{definition}

Recently, Smith's result \cite{smith} has enabled us to determine the density of the joint distribution of $\phi$-Selmer groups and $\hat{\phi}$-Selmer groups when $\phi$ is balanced. Although the explicit construction in Proposition~\ref{pair} alone would already suffice to prove our main theorem (Theorem~\ref{main}), we prove instead the more general proposition, which is non-explicit. 

We use the following proposition to simultaneously control $\Sha(E_1^D/\mathbb{Q})$ and $\Sha(E_2^D/\mathbb{Q})$.

\begin{proposition}\label{tamagawaratio}

Fix a nonzero square-free integer \(D_0\), and set
\[
S:=\{v:v\mid 2\Delta_{E_1}\infty\}.
\]

Define the Tamagawa ratio by
\[
\tau_{D} \coloneqq
 \frac{\#\,\mathrm{Sel}^{\phi}(E_1^{D})}
      {\#\,\mathrm{Sel}^{\hat{\phi}}(E_2^{D})}.\]

For any square-free $D$ satisfying \(DD_{0}\in(\Bbb{Q}_v^{\times})^{2}\) for every \(v\in S\),  \(\tau_D=\tau_{D_0}\). In particular, \(\tau_D\) is independent of the choice of \(D\).

\end{proposition}
\begin{proof}

We have 
\[
\tau_{D}
  = \prod_v \left(\frac{1}{2}\,\#\,W_v(\phi,D)\right),
\]
where $W_v(\phi,D)
  \;:=\;
  \text{Ker} \ \!\Bigl (
     H^{1}(\mathbb{Q}_v, E_1^{D}[\phi])
     \longrightarrow
     H^{1}(\mathbb{Q}_v, E_1^{D})
  \Bigr)$ \ (see Theorem 2.2 of \cite{Kl}).

When $E_1^D/\mathbb{Q}$ has good reduction at an odd prime $v$,  $W_v(\phi,D)
  \cong H^1_{\mathrm{nr}}(\Bbb{Q}_v,E_1^D[\phi])\cong H^1(\widehat{\mathbb Z},\mathbb Z/2\mathbb Z)\cong \mathbb Z/2\mathbb Z$ where $H^1_{\mathrm{nr}}(\Bbb{Q}_v,E_1^D[\phi])$ is the unramified cohomology. When $E_1/\mathbb{Q}$ has good reduction at an odd prime $v$ and $v(D)=1$, $W_v(\phi, D)\cong \mathbb{Z}/2\mathbb{Z}$ (see Section 3.1 of \cite{smith}). The remaining cases are where $E_1/\mathbb{Q}$ has bad reduction at $v$ or $v=2$ or $v=\infty$. Since $DD_0\in (\mathbb{Q}_v^{\times})^2$ for every $v\in S$, $E_1^D[\phi]\cong E_1^{D_0}[\phi]$ and $W_v(\phi, D)\cong W_v(\phi, D_0)$. Summarizing the above, $\tau_D=\tau_{D_0}$. \end{proof}

\begin{theorem}\label{tor}

Let \(E\) be an elliptic curve over \(\mathbb{Q}\).
There are at most finitely many square-free integers \(D\) for which
\(E^{D}(\mathbb{Q})\) contains a torsion point of order \(>2\).

\end{theorem}

\begin{proof}
See Proposition 1 of \cite{Mazur}.
\end{proof}

\begin{theorem}[Consequence of Theorem 1.10 of \cite{smith}]
\label{smith}
Let $\phi:E_1\to E_2$ be a balanced $2$-isogeny defined over $\Bbb{Q}$, and let $D_0$ be a nonzero integer. Set \[S\coloneqq\{v : v\mid 2\Delta_{E_1}\infty\}.\]
Let
\[
X_E(D_0,H):=
\Bigl\{
D\in\mathbb Z \;\Bigm|\;
D \text{ is square-free},\;
|D|\le H,\;
DD_0\in (\mathbb Q_v^\times)^2 \text{ for every } v\in S
\Bigr\}.
\]

For \(D\in X_E(D_0,H)\), define \(u\) by
\[
2^u=\tau_D.
\]

This quantity is independent of the choice of $D$; see Proposition \ref{tamagawaratio}. 
Let $\alpha_{r,u}$ be the limit, as $n\to\infty$, of the probability that a uniformly random $(n-u)\times n$ matrix over $\mathbb F_2$ has rank exactly $n-r$. Then, for any $r\ge 0$,
\[
\lim_{H\to\infty}
\frac{
\#\{D\in X_E(D_0,H): \dim_{\mathbb F_2}\Sel^\phi(E_1^D)=r+1\}
}{
\#X_E(D_0,H)
}
=
\alpha_{r,u}.
\]
\end{theorem}
\begin{proof}

 Let $r_\phi(E_1^D)
=
\dim_{\mathbb F_2}
\left(
\operatorname{im}\!\big(H^1(G_{\mathbb Q},E_1[\phi]) \to H^1(G_{\mathbb Q},E_1^D[2^\infty])\big)
\;\cap\;
\mathrm{Sel}_{2^\infty}(E_1^D/\Bbb{Q})
\right)$. By Theorem 1.10 of \cite{smith}, 

\[
\lim_{H\to\infty}
\frac{\#\{D \in X_E(D_0,H) : r_\phi(E_1^D)=r\}}{\#X_E(D_0,H)}
= \alpha_{r,u}.
\]

For $D$ such that $E_1^D(\Bbb{Q})[2]=E_1^D(\Bbb{Q})[4]$ and $E_2^D(\Bbb{Q})[2]=E_2^D(\Bbb{Q})[4]$, we have $\mathrm{dim}_{\Bbb{F}_2}\mathrm{Sel}^{\phi}(E_1^D)=1+r_{\phi}(E_1^D)$ as observed in Section 3.1 of \cite{smith}. By Theorem \ref{tor} we know $E_1^D(\Bbb{Q})[2]=E_1^D(\Bbb{Q})[4]$ and $E_2^D(\Bbb{Q})[2]=E_2^D(\Bbb{Q})[4]$ fail only for finitely many square-free integer $D$. Excluding these finitely many values of $D$ does not affect the density, so the result follows.
\end{proof}

\begin{proposition}\label{twistsha}
Suppose $E_i/\mathbb{Q} \ (i\in \{1,2\})$ is an elliptic curve with $E_i(\mathbb{Q})[2] \simeq \mathbb{Z}/2\mathbb{Z}$. Suppose that $E_1$ and $E_2$ are isogenous via a balanced $2$-isogeny $\phi$. Suppose that  $\mathrm{dim}_{\mathbb{F}_2} \mathrm{Sel}^{\phi}(E_1)=\mathrm{dim}_{\mathbb{F}_2} \mathrm{Sel}^{\hat{\phi}}(E_2)$.
Then, there exist infinitely many square-free integers $D>0$ such that 

\begin{itemize}
\item $\Sha(E^D_1/\mathbb{Q})[2]= \Sha(E^D_2/\mathbb{Q})[2]=0$ 
\item $\text{gcd}(D, 2\Delta_{E_1}\Delta_{E_2})=1$ and $D\equiv 1 \bmod 8$.
\item $\mathrm{rank}(E^D_i/\Bbb{Q})=0 \ (i=1,2)$ and $\mathrm{rank}_{an}(E_1^D/\Bbb{Q})\equiv 0\bmod 2$.
\end{itemize}

\end{proposition}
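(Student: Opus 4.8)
The plan is to take $D_0=1$ in Theorem~\ref{smith} and to run Smith's distribution result inside the family
\[
X \;=\; \bigcup_{H} X_E(1,H) \;=\; \{\,D\ \text{square-free}\;:\; D\in(\mathbb{Q}_v^\times)^2\ \text{for all}\ v\in S\,\},\qquad S=\{v\mid 2\Delta_{E_1}\Delta_{E_2}\infty\}.
\]
For $D\in X$ the condition $D\in(\mathbb{Q}_v^\times)^2$ at each finite $v\in S$ forces $v_v(D)$ to be even, hence $0$ since $D$ is square-free; thus $\gcd(D,2\Delta_{E_1}\Delta_{E_2})=1$ automatically, giving the second bullet, while the condition at $v=\infty$ merely restricts to $D>0$. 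Because $1\in X$ and, by Remark~\ref{tamagawaratio}, the integer $u=\dim_{\mathbb{F}_2}\mathrm{Sel}^{\phi}(E_1^D)-\dim_{\mathbb{F}_2}\mathrm{Sel}^{\hat\phi}(E_2^D)$ is constant on $X$, the hypothesis $\dim_{\mathbb{F}_2}\mathrm{Sel}^{\phi}(E_1)=\dim_{\mathbb{F}_2}\mathrm{Sel}^{\hat\phi}(E_2)$ gives $u=0$.

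With $u=0$, I would apply Theorem~\ref{smith} with $r=0$: the set of $D\in X$ with $\dim_{\mathbb{F}_2}\mathrm{Sel}^{\phi}(E_1^D)=1$ has density $\alpha_{0,0}=\prod_{k\ge1}(1-2^{-k})>0$, the limiting probability that a random square matrix over $\mathbb{F}_2$ is invertible. Since $X$ is infinite this produces infinitely many admissible $D$; fix any one. From $\tau^{D}=2^{u}=1$ (Remark~\ref{tamagawaratio}) I then read off $\dim_{\mathbb{F}_2}\mathrm{Sel}^{\hat\phi}(E_2^D)=1$ as well.

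Next I would pass from these two one-dimensional isogeny Selmer groups to the full $2$-Selmer groups. First I note that quadratic twisting is harmless: twisting multiplies the $x$-coordinates of the $2$-torsion by $D$, so the $2$-division field is twist-invariant, $\mathbb{Q}(E_i^D[2])=\mathbb{Q}(E_i[2])$; hence $\phi$ stays balanced on the twists, $E_i^D(\mathbb{Q})[2]\cong\mathbb{Z}/2\mathbb{Z}$ persists, and both $\ker\phi$ and $\ker\hat\phi$ remain rational (each a trivial $\mathbb{Z}/2\mathbb{Z}$-module, so $\ker\phi=E_1^D(\mathbb{Q})[2]$ and $\ker\hat\phi=E_2^D(\mathbb{Q})[2]$). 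I then invoke the standard descent exact sequence attached to $[2]=\hat\phi\circ\phi$,
\[
0\longrightarrow \frac{E_2^D(\mathbb{Q})[\hat\phi]}{\phi\,E_1^D(\mathbb{Q})[2]}\longrightarrow \mathrm{Sel}^{\phi}(E_1^D)\stackrel{\iota}{\longrightarrow}\mathrm{Sel}^{2}(E_1^D)\stackrel{\pi}{\longrightarrow}\mathrm{Sel}^{\hat\phi}(E_2^D).
\]
Here $\phi\,E_1^D(\mathbb{Q})[2]=\phi(\ker\phi)=0$ while $E_2^D(\mathbb{Q})[\hat\phi]=\ker\hat\phi\cong\mathbb{Z}/2\mathbb{Z}$, so the kernel term is exactly one-dimensional. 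As $\dim_{\mathbb{F}_2}\mathrm{Sel}^{\phi}(E_1^D)=1$, the map $\iota$ vanishes, hence $\pi$ is injective and $\dim_{\mathbb{F}_2}\mathrm{Sel}^{2}(E_1^D)\le\dim_{\mathbb{F}_2}\mathrm{Sel}^{\hat\phi}(E_2^D)=1$. Combining this with the Cassels exact sequence $0\to E_1^D(\mathbb{Q})/2E_1^D(\mathbb{Q})\to\mathrm{Sel}^2(E_1^D)\to\Sha(E_1^D)[2]\to0$ and $\dim_{\mathbb{F}_2}E_1^D(\mathbb{Q})/2E_1^D(\mathbb{Q})=\mathrm{rank}(E_1^D)+1$ forces $\mathrm{rank}(E_1^D)=0$ and $\Sha(E_1^D)[2]=0$. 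The symmetric argument with $\phi,\hat\phi$ and $E_1,E_2$ interchanged yields $\mathrm{rank}(E_2^D)=0$ and $\Sha(E_2^D)[2]=0$, which is the first bullet.

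Finally, for the analytic-rank parity I would use that $\Sha(E_1^D)[2]=0$ implies $\Sha(E_1^D)[2^\infty]=0$, so the $2^\infty$-Selmer corank of $E_1^D$ equals $\mathrm{rank}(E_1^D)=0$; the $2$-parity theorem of Dokchitser--Dokchitser then gives $\mathrm{rank}_{an}(E_1^D)\equiv 0\bmod 2$, the third bullet. The step I expect to be most delicate is the bookkeeping that makes Theorem~\ref{smith} applicable uniformly across the whole twist family: one must verify that balancedness, the $\mathbb{Z}/2\mathbb{Z}$ rational $2$-torsion, and the rationality of both isogeny kernels survive for \emph{every} $D\in X$, so that the connecting term $E_2^D(\mathbb{Q})[\hat\phi]/\phi E_1^D(\mathbb{Q})[2]$ is genuinely one-dimensional simultaneously with the constancy $u=0$ of the Tamagawa ratio, making the collapse $\dim\mathrm{Sel}^{\phi}=1\Rightarrow\dim\mathrm{Sel}^{2}\le1$ valid for all the $D$ produced by Smith's theorem.
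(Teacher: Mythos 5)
Your proposal is correct and takes essentially the same route as the paper: apply Theorem~\ref{smith} with $D_0=1$, $r=0$, $u=0$ (constancy of $u$ via Remark~\ref{tamagawaratio}) to get infinitely many twists with $\dim_{\mathbb{F}_2}\mathrm{Sel}^{\phi}(E_1^D)=\dim_{\mathbb{F}_2}\mathrm{Sel}^{\hat\phi}(E_2^D)=1$, then use the Schaefer--Stoll four-term exact sequence with the one-dimensional kernel term $E_2^D(\mathbb{Q})[\hat\phi]/\phi(E_1^D(\mathbb{Q})[2])$ to force $\dim_{\mathbb{F}_2}\mathrm{Sel}^2(E_i^D)\le 1$, hence $\operatorname{rank}=0$ and $\Sha(E_i^D/\mathbb{Q})[2]=0$, with the analytic-rank parity coming from the $2$-parity theorem. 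The only cosmetic differences are that the paper cites Monsky for the $2$-parity statement over $\mathbb{Q}$ where you cite Dokchitser--Dokchitser, and that you make explicit the twist-invariance checks (rational kernels, balancedness, $E_i^D(\mathbb{Q})[2]\cong\mathbb{Z}/2\mathbb{Z}$) that the paper leaves implicit.
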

\begin{proof}

Let us take the integral model $E_1: y^2=x^3+ax^2+bx, E_2: y^2=x^3-2ax^2+(a^2-4b)x \ (a,b \in \Bbb{Z})$ and let $\phi: E_1\to E_2, \phi(x,y)=(\frac{y^2}{x^2}, \frac{y(b-x^2)}{x^2})$. For $D\in \Bbb{Z}$, let $s_D = \mathrm{dim}_{\mathbb{F}_2} \mathrm{Sel}^{\phi}(E^D_1)$ and $t_D = \mathrm{dim}_{\mathbb{F}_2} \mathrm{Sel}^{\hat{\phi}}(E^D_2)$. There exists an exact sequence: \[0\to E^D_2(\Bbb{Q})[\hat{\phi}]/\phi(E_1^D(\Bbb{Q})[2])\to \mathrm{Sel}^{\phi}(E^D_1)\to \mathrm{Sel}^2(E^D_1)\stackrel{\phi}{\to} \mathrm{Sel}^{\hat{\phi}}(E^D_2)\] (see [\cite{Sch}, Lemma 9.1]). Thus, 
\[\dim_{\Bbb{F}_2}\mathrm{Sel}^2(E_1^D)\le s_D+t_D-\dim_{\Bbb{F}_2}\dfrac{E^D_2(\Bbb{Q})[\hat{\phi}]}{\phi(E^D_1(\Bbb{Q})[2])},\]

\[
\dim_{\Bbb{F}_2}\mathrm{Sel}^2(E_2^D) \le t_D+s_D-\dim_{\Bbb{F}_2}\frac{E_1^D(\Bbb{Q})[\phi]}{\hat{\phi}(E_2^D(\Bbb{Q})[2])}.
\]

By Theorem $\ref{smith}$, there exist infinitely many square-free integers $D$ such that $(D, 2\Delta_{E_1}\Delta_{E_2})=1$ and $s_D = t_D = 1$ by taking $D_0=1$ and $r=0$. Indeed, with $D_0=1$, we have \[u=s_D-t_D=\mathrm{dim}_{\Bbb{F}_2}\mathrm{Sel}^{\phi}(E_1^D)-\mathrm{dim}_{\Bbb{F}_2}\mathrm{Sel}^{\hat{\phi}}(E_2^D)=\mathrm{dim}_{\Bbb{F}_2}\mathrm{Sel}^{\phi}(E_1)-\mathrm{dim}_{\Bbb{F}_2}\mathrm{Sel}^{\hat{\phi}}(E_2)=0\] by Proposition \ref{tamagawaratio}. 
Also, $D\in (\Bbb{Q}_v^{\times})^{2}$ for every $v \mid 2\Delta_{E_1}\Delta_{E_2}\infty$ implies $\text{gcd}(D, 2\Delta_{E_1}\Delta_{E_2})=1$, $D>0$ and $D\equiv 1 \bmod 8$. Hence, it remains only to show that $\alpha_{0,0}>0$. In fact,

\[
\alpha_{0,0}
=
\lim_{n\to\infty}\prod_{k=0}^{n-1}\frac{2^n-2^k}{2^n}
=
\prod_{s=1}^{\infty}(1-2^{-s})
=
0.288\ldots>0.
\]
as required (see Case 2.12 of \cite{smith2} for the formula for $\alpha_{r,u}$).

For this square-free integer $D$, we have 
\begin{equation}\label{shazero} \dim_{\mathbb{F}_2}\Sha(E^D_i/\mathbb{Q})[2]=\dim_{\mathbb{F}_2}\mathrm{Sel}^2(E^D_i/\mathbb{Q})-\dim_{\mathbb{F}_2} E^D_i(\mathbb{Q})/2E^D_i(\mathbb{Q}) \leq 1-1= 0\end{equation}
for $i \in \{1,2\}$. Additionally, since \[\#\dfrac{E^D_i(\Bbb{Q})}{2E^D_i(\Bbb{Q})}\le \#\mathrm{Sel}^2(E^D_i)\le 2\] and $\#\dfrac{E^D_i(\Bbb{Q})}{2E^D_i(\Bbb{Q})}=2^{\mathrm{rank}(E^D_i/\Bbb{Q})+1}$, we have $\mathrm{rank}(E^D_i/\Bbb{Q})=0$ for $i\in \{1,2\}$. 

Note that the condition $\mathrm{rank}_{an}(E_1^D/\Bbb{Q})=\mathrm{rank}_{an}(E_2^D/\Bbb{Q})\equiv 0\bmod 2$ follows automatically from $\Sha(E^D_1/\Bbb{Q})[2]\cong \Sha(E^D_2/\Bbb{Q})[2]=0$ and $\mathrm{rank}(E^D_1/\Bbb{Q})=\mathrm{rank}(E^D_2/\Bbb{Q})=0$. Indeed, let $\omega_E$ be a root number of $E/\Bbb{Q}$. The condition that $\mathrm{rank}(E^D_1/\Bbb{Q})=0$ and $\Sha(E^D_1/\Bbb{Q})[2]=0$ tell us that $2^{\infty}$-Selmer rank of $E^D_1/\Bbb{Q}$, that is $\mathrm{rank}(E^D_1/\Bbb{Q}) + \text{number of copies of } \Bbb{Q}_2/\Bbb{Z}_2 \text{ in } \Sha(E^D_1/\Bbb{Q})$, is zero. By \cite{monsky}, Theorem 1.5, this implies $\omega(E^D_1/\Bbb{Q})=1$, thus $\mathrm{rank}_{an}(E^D_1/\Bbb{Q})$ is even. 
\end{proof}

\section{Kodaira symbol, Tamagawa number and real period}

In this section, we recall the results on Kodaira symbols, Tamagawa numbers, and real periods.

\begin{theorem}\label{dok}
Let $E_1/\Bbb{Q}$ and $E_2/\Bbb{Q}$ be elliptic curves that are isogenous by a $2$-isogeny over $\Bbb{Q}$. Assume
$E_1$ has good reduction at $2$.  The following are equivalent:
\begin{enumerate}
\item $E_1$ and $E_2$ have the same Kodaira symbol at every prime $\ell$;
\item $\Delta_{E_1}=\pm\Delta_{E_2}$.
\end{enumerate}
\end{theorem}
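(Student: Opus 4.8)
The plan is to localise everything at each prime $\ell$ and to drive the whole argument with two isogeny invariants. First I would record that condition (2) is equivalent to the collection of local statements $v_\ell(\Delta_{E_1})=v_\ell(\Delta_{E_2})$ for every prime $\ell$, since two integers agree up to sign exactly when they have equal $\ell$-adic valuations everywhere, and that these valuations are read off the global minimal Weierstrass model (which exists over $\mathbb{Q}$, of class number one). Since $E_1$ and $E_2$ are isogenous they have isomorphic $\ell$-adic representations up to the isogeny, hence the same conductor, giving $v_\ell(N_{E_1})=v_\ell(N_{E_2})$ for all $\ell$; moreover the inertia action on the Tate module agrees up to finite index, so $E_1$ and $E_2$ share the same potential reduction type (potentially good versus potentially multiplicative) at every $\ell$. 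These two facts — equal conductor exponents and equal potential reduction type — are the engine of the proof.

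The bridge between the Kodaira symbol and the discriminant is Ogg's formula (valid at every prime, including $\ell=2,3$, by Saito),
\[
v_\ell(\Delta_{E}) = v_\ell(N_E) + m_\ell(E) - 1,
\]
where $m_\ell(E)$ is the number of irreducible components of the special fibre of the minimal regular model, a quantity determined purely by the Kodaira symbol at $\ell$. The implication $(1)\Rightarrow(2)$ is then immediate: equal Kodaira symbols give equal $m_\ell$, and combined with equal $v_\ell(N)$ the formula yields equal $v_\ell(\Delta)$ at every $\ell$, whence $\Delta_{E_1}=\pm\Delta_{E_2}$.

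For $(2)\Rightarrow(1)$ I would run the formula backwards: equal $v_\ell(\Delta)$ together with equal $v_\ell(N)$ force equal $m_\ell$, and it remains to check that the triple $(m_\ell, v_\ell(N), \text{potential type})$ pins down the Kodaira symbol. At primes of good reduction ($v_\ell(N)=0$) both curves are type $I_0$, and at primes of multiplicative reduction ($v_\ell(N)=1$) both are type $I_{v_\ell(\Delta)}$, so there is nothing to prove. The genuine content is at additive primes: running through the Kodaira classification, within a fixed component number $m_\ell$ there is at most one potentially good additive type and at most one potentially multiplicative additive type, while the conductor exponent separates multiplicative from additive reduction. Thus the isogeny-invariance of the potential reduction type is precisely what resolves the unavoidable coincidences of the map (Kodaira symbol) $\mapsto v_\ell(\Delta)$ — most notably $IV^*$ versus $I_2^*$, $III^*$ versus $I_3^*$, and $II^*$ versus $I_4^*$, each pair sharing the same $v_\ell(\Delta)$ and $v_\ell(N)$ but differing in potential type.

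The prime $\ell=2$ is where I expect the main obstacle, and it is exactly what the good-reduction hypothesis removes: if $E_1$ has good reduction at $2$ then $v_2(\Delta_{E_1})=0$, so under (2) also $v_2(\Delta_{E_2})=0$ and both curves are type $I_0$ at $2$, while under (1) the matching $I_0$ symbol forces the same conclusion — either way $\ell=2$ contributes nothing and its delicate wild-ramification behaviour never enters. The one remaining technical point is to confirm that the analysis at $\ell=3$ (where wild ramification occurs and $v_3(N)$ can exceed $2$) still falls under the clean principle that $(m_3, v_3(N), \text{potential type})$ determines the symbol; I would verify this against the Tate/Papadopoulos tables, the only real risk being an unforeseen coincidence of two additive types sharing all three invariants, which the component-count argument above already excludes.
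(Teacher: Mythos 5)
Your proof is correct, and its backbone coincides with the paper's: both directions run on Ogg's formula $v_\ell(\Delta_E)=f_\ell+m_\ell-1$ together with the isogeny-invariance of the conductor exponent, so $(1)\Rightarrow(2)$ is essentially verbatim the paper's argument, and in the converse both of you deduce $m_{\ell,E_1}=m_{\ell,E_2}$ the same way. Where you genuinely diverge is in how the additive case is disambiguated. The paper invokes Lemma 2.16 of \cite{twin} to conclude that $E_1$ has potentially good reduction at every $\ell>2$, so that the potentially multiplicative types $I_n^*$ ($n\ge 1$) never arise and the symbol is read off from $m_\ell$ alone within the list $II, III, IV, I_0^*, IV^*, III^*, II^*$, whose component counts $1,2,3,5,7,8,9$ are pairwise distinct. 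You instead keep the types $I_n^*$ in play and eliminate the genuine coincidences of the map (symbol) $\mapsto (m_\ell, f_\ell)$ — namely $IV^*$ vs.\ $I_2^*$, $III^*$ vs.\ $I_3^*$, $II^*$ vs.\ $I_4^*$ — by the isogeny-invariance of the potential reduction type, which is exactly the right mechanism: the rational Tate modules $V_\ell(E_1)\cong V_\ell(E_2)$ are isomorphic as local Galois modules (so "agrees up to finite index" in your write-up should really be "are isomorphic", though the conclusion is unaffected), and finiteness of the inertia image characterizes potential good reduction. Your route is thus self-contained where the paper outsources the key step to the discriminant-twins literature, and it is also more explicit about the cases the paper leaves implicit (good and multiplicative primes, the role of the hypothesis at $\ell=2$, and Saito's extension of Ogg's formula to $\ell=2,3$); the price is the explicit trawl through the Kodaira table, which your verification that each fixed $m_\ell$ admits at most one potentially good and at most one potentially multiplicative additive type correctly completes.
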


\begin{proof}
($(1)\Rightarrow (2)$) Ogg's formula states that \[v_\ell(\Delta_E) = f_{\ell, E} + m_{\ell, E} - 1,\] where $f_{\ell,E}$ is the conductor exponent of $E$, $m_{\ell,E}$ is the number of components in the special fibre of the minimal regular model of $E$, and $\Delta_E$ is the minimal discriminant of $E$.
(see \cite{silad}, Chapter VI, §11). Since $E_1$ and $E_2$ are isogenous, the conductor exponent $f_\ell$ is an isogeny invariant, so $f_{\ell,E_1} = f_{\ell,E_2}$ for all primes $\ell$. Since $E_1$ and $E_2$ have the same Kodaira symbols at all primes $\ell$, we have $m_{\ell,E_1} = m_{\ell,E_2}$ for all primes $\ell$. Therefore, $v_\ell(\Delta_{E_1}) = v_\ell(\Delta_{E_2})$ for all primes $\ell$. This implies $\Delta_{E_1} = \pm \Delta_{E_2}$.

($(1) \Leftarrow (2)$) Suppose $\Delta_{E_1} = \pm \Delta_{E_2}$. Then for all primes $\ell$, we have $v_\ell(\Delta_{E_1}) = v_\ell(\Delta_{E_2})$. By Ogg's formula, this means $f_{\ell,E_1} + m_{\ell,E_1} - 1 = f_{\ell,E_2} + m_{\ell,E_2} - 1$. Since $E_1$ and $E_2$ are isogenous, $f_{\ell,E_1} = f_{\ell,E_2}$, which implies $m_{\ell,E_1} = m_{\ell,E_2}$. For $\ell=2$, since $E_1$ has good reduction at $2$ by assumption and good reduction is preserved under isogeny, $E_2$ also has good reduction at $2$. Hence both curves have Kodaira symbol \(I_0\) at \(2\). Now let \(\ell>2\). By Lemma 2.11 of \cite{twin}, \(E_1\) has potentially good reduction at \(\ell\). By Theorem 5.4 (1) of \cite{dokchitser}, the Kodaira symbols are the same. 

\end{proof}

\begin{theorem}\label{kodaira}
Suppose $E_1/\mathbb{Q}$ and $E_2/\mathbb{Q}$ are elliptic curves such that there exists a $2$-isogeny over $\Bbb{Q}$ between them. Let $\ell\ge 3$ be a prime. If $\Delta_{E_1}=\Delta_{E_2}$, then $c_{E_1/\mathbb{Q}_\ell} = c_{E_2/\mathbb{Q}_\ell}$.
\end{theorem}

\begin{proof}
 Since $\Delta_{E_1}=\Delta_{E_2}$, $E_1$ and $E_2$ have potentially good reduction at $\ell$ by Lemma 2.11 of \cite{twin}.
When the Kodaira symbol at $\ell$ is not $I^*_0$, then $c_{E_1/\mathbb{Q}_\ell} = c_{E_2/\mathbb{Q}_\ell}$ (see the proof of Theorem 6.1, \cite{dokchitser}). When the Kodaira symbol at $\ell$ is $I^*_0$, the result follows since $1=\frac{\Delta_{E_1}}{\Delta_{E_2}} \in N_{F/\Bbb{Q}_\ell}(F^{\times})$ where $F$ is a quadratic extension of $\Bbb{Q}_\ell$ over which $E_1$ has good reduction (see \cite{dokchitser}, Table 1 and Theorem 6.1). 
  
  \end{proof}

\begin{theorem}\label{period}
Let $p$ be a prime number. Let $E_1/\Bbb{Q}$ and $E_2/\Bbb{Q}$ be elliptic curves that are isogenous by a $p$-isogeny over $\Bbb{Q}$. Assume that $E_1$ and $E_2$ have the same Tamagawa numbers at all primes and $\mathrm{rank}_{an}(E_1/\Bbb{Q})\equiv 0 \bmod 2$.  Then, $\Omega_{E_1}=\Omega_{E_2}$.

If $\text{BSD}(E_1/\mathbb{Q}) = \text{BSD}(E_2/\mathbb{Q})$ and $E_1$ and $E_2$ are isogenous by $p$-isogeny over $\Bbb{Q}$, then $E_1/\mathbb{Q}$ and $E_2/\Bbb{Q}$ have even analytic rank.
\end{theorem}

\begin{proof}
By Theorem 8.2 of \cite{dokchitser}, \[\Omega_{E_1}=\Omega_{E_2} \iff \sum_\ell \text{ord}_p (\frac{c_{E_1/\mathbb{Q}_\ell}}{c_{E_2/\mathbb{Q}_\ell}})\equiv \text{ord}_{s=1} L(E_1,s) \bmod 2.\] 
Since Tamagawa numbers at all primes are the same, $\sum_l \text{ord}_p (\frac{c_{E_1/\mathbb{Q}_\ell}}{c_{E_2/\mathbb{Q}_\ell}})=0$ holds. Thus, the equality $\Omega_{E_1}=\Omega_{E_2}$ is equivalent to the congruence $\text{ord}_{s=1} L(E_1,s)\equiv 0 \bmod 2$, which is true.

Now suppose that $\mathrm{BSD}(E_1/\mathbb Q)=\mathrm{BSD}(E_2/\mathbb Q)$.
Then, in particular, the Tamagawa numbers agree at all primes and
\(\Omega_{E_1}=\Omega_{E_2}\). The equivalence
proved above implies $\operatorname{ord}_{s=1}L(E_1,s)\equiv 0 \bmod 2$. Thus both analytic ranks are even.
\end{proof}

\section{Infinitely many pairs of elliptic curves sharing the same BSD data}

In this section, we show that there are infinitely many BSD twins, and that infinitely many twins remain even after imposing equality of the Kodaira symbols and the minimal discriminants. We first observe that a pair connected by a $p$-isogeny over $\mathbb{Q}$ has isomorphic Mordell--Weil groups and Tate--Shafarevich groups, provided that their $p$-parts agree. Here we do not assume the finiteness of the Tate--Shafarevich groups.

\begin{lemma}\label{functor}
Let $F$ be a covariant functor from the category of elliptic curves over $\Bbb{Q}$ to the category of abelian groups such that $F([n]) = [n]$ for all positive integers $n$ where $[n]$ is the multiplication by $n$ map. Let $p$ be a prime, and suppose $E_1$ and $E_2$ are connected by an isogeny $\phi$ of degree $p$. Assume that for any elliptic curve $E$, $F(E)$ is a torsion group. If $F(E_1)[p^{\infty}] \cong F(E_2)[p^{\infty}]$ holds, then $F(E_1) \cong F(E_2)$ holds.
\end{lemma}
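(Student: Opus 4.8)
The plan is to exploit the fact that a torsion abelian group splits canonically as the direct sum of its primary components, so that $F(E_i)\cong\bigoplus_{\ell}F(E_i)[\ell^{\infty}]$ over all primes $\ell$. It therefore suffices to produce, for every prime $\ell$ separately, an isomorphism $F(E_1)[\ell^{\infty}]\cong F(E_2)[\ell^{\infty}]$. At $\ell=p$ this is exactly the hypothesis, so all the work lies in the primes $\ell\neq p$.

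For $\ell\neq p$, I would bring in the dual isogeny $\hat{\phi}:E_2\to E_1$, which satisfies $\hat{\phi}\circ\phi=[p]$ on $E_1$ and $\phi\circ\hat{\phi}=[p]$ on $E_2$. Applying the covariant functor $F$ and using $F([p])=[p]$ yields group homomorphisms $F(\phi):F(E_1)\to F(E_2)$ and $F(\hat{\phi}):F(E_2)\to F(E_1)$ whose two composites $F(\hat{\phi})\circ F(\phi)$ and $F(\phi)\circ F(\hat{\phi})$ are each multiplication by $p$. Since a group homomorphism carries an element of order $\ell^{k}$ to one of order dividing $\ell^{k}$, both $F(\phi)$ and $F(\hat{\phi})$ restrict to the $\ell$-primary components, and there the two composites are the restriction of $[p]$.

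The key point is that multiplication by $p$ is an automorphism of any $\ell$-primary torsion group when $\ell\neq p$: it is injective because $\gcd(p,\ell)=1$ forces the kernel to be trivial, and surjective by a B\'ezout argument, since $ap+b\ell^{k}=1$ inverts $[p]$ on the $\ell^{k}$-torsion (for $\ell^{k}y=0$, the element $x=ay$ satisfies $px=apy=(1-b\ell^{k})y=y$). Consequently, on the $\ell$-primary parts, $F(\hat{\phi})\circ F(\phi)$ being an isomorphism makes $F(\phi)$ injective, while $F(\phi)\circ F(\hat{\phi})$ being an isomorphism makes $F(\phi)$ surjective. Hence $F(\phi)$ restricts to an isomorphism $F(E_1)[\ell^{\infty}]\cong F(E_2)[\ell^{\infty}]$ for all $\ell\neq p$. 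Reassembling the primary components then gives $F(E_1)\cong F(E_2)$.

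There is essentially no hard step here; the only things to keep straight are the bookkeeping — checking that $F(\phi)$ genuinely respects the primary decomposition (a formal consequence of its being a homomorphism) and that covariance of $F$ supplies $F(\hat{\phi}\circ\phi)=F(\hat{\phi})\circ F(\phi)$ in the correct order. The mild subtlety worth flagging is that the resulting isomorphism $F(E_1)\cong F(E_2)$ is assembled from maps induced by $\phi$ on the parts with $\ell\neq p$ but from the abstract hypothesis at $\ell=p$, so it need not be induced by a single morphism of abelian varieties; only the isomorphism of abstract abelian groups is claimed, which is all the lemma asserts.
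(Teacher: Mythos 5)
Your proposal is correct and follows essentially the same route as the paper's proof: apply $F$ to $\hat{\phi}\circ\phi=[p]$ and $\phi\circ\hat{\phi}=[p]$, observe that $[p]$ is bijective on $\ell$-primary components for $\ell\neq p$ so that $F(\phi)$ restricts to an isomorphism there, invoke the hypothesis at $\ell=p$, and reassemble via the primary decomposition of the torsion groups $F(E_i)$. Your added details (the B\'ezout inverse of $[p]$ and the remark that the final isomorphism is not induced by a single morphism) are correct elaborations, not a different argument.
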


\begin{proof} Since $\phi \circ \hat{\phi} = [p]$ and $\hat{\phi} \circ \phi = [p]$, we have $F(\phi) \circ F(\hat{\phi}) = [p]$ and $F(\hat{\phi}) \circ F(\phi) = [p]$. For a prime $\ell\neq p$, since $\gcd(p,\ell) = 1$, $F([p]) = [p]: F(E_1)[\ell^{\infty}] \to F(E_1)[\ell^{\infty}]$ is bijective, it follows that $F(\phi): F(E_1)[\ell^{\infty}]\to F(E_2)[\ell^{\infty}]$ is an isomorphism. Since $F(E_1)$ and $F(E_2)$ are torsion groups, we have
\[F(E_1) \cong \bigoplus_{q: \text{prime}} F(E_1)[q^{\infty}] \cong \bigoplus_{q: \text{prime}} F(E_2)[q^{\infty}]\cong F(E_2).\]
\end{proof}

\begin{proposition}\label{sha}
Let $p$ be a prime. Let $E_1$ and $E_2$ be elliptic curves over $\mathbb{Q}$ that are
$p$-isogenous over $\mathbb{Q}$.

(1) Suppose $E_1(\mathbb{Q})[p^{\infty}] \cong E_2(\mathbb{Q})[p^{\infty}]$ holds. Then $E_1(\mathbb{Q}) \cong E_2(\mathbb{Q})$.

(2) Let $p$ be a prime and suppose $\Sha(E_1/\mathbb{Q})[p^{\infty}] \cong \Sha(E_2/\mathbb{Q})[p^{\infty}]$ holds. Then, without assuming the finiteness of the Tate--Shafarevich group, $\Sha(E_1/\mathbb{Q}) \cong \Sha(E_2/\mathbb{Q})$ holds.
\end{proposition}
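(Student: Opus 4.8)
The plan is to derive both parts from Lemma \ref{functor} by feeding it the correct covariant functor, the only difference between the two cases being that the Mordell--Weil group carries a free part while $\Sha$ does not. So the work is entirely in checking that a suitable $F$ satisfies the three hypotheses of the lemma (covariance, $F([n])=[n]$, and $F(E)$ torsion) and then translating the $p^{\infty}$-hypothesis into the lemma's input.

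For part (1), I would \emph{not} apply the lemma to $E\mapsto E(\mathbb{Q})$ directly, since the free summand $\mathbb{Z}^{\mathrm{rank}}$ violates the torsion hypothesis. Instead I take $F(E):=E(\mathbb{Q})_{\mathrm{tor}}$. This is covariant (a morphism of abelian varieties over $\mathbb{Q}$ sends rational torsion to rational torsion), satisfies $F([n])=[n]$, and is torsion by construction. Because the free part has no $p$-torsion, $F(E_i)[p^{\infty}]=E_i(\mathbb{Q})[p^{\infty}]$, so the hypothesis of (1) is precisely $F(E_1)[p^{\infty}]\cong F(E_2)[p^{\infty}]$; Lemma \ref{functor} then gives $E_1(\mathbb{Q})_{\mathrm{tor}}\cong E_2(\mathbb{Q})_{\mathrm{tor}}$. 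To finish I would use that the degree-$p$ isogeny and its dual compose to $[p]$, which is invertible after $\otimes\mathbb{Q}$, so $E_1(\mathbb{Q})\otimes\mathbb{Q}\cong E_2(\mathbb{Q})\otimes\mathbb{Q}$ and hence $\mathrm{rank}(E_1/\mathbb{Q})=\mathrm{rank}(E_2/\mathbb{Q})=:r$. Combining this with the structure theorem $E_i(\mathbb{Q})\cong\mathbb{Z}^{r}\oplus E_i(\mathbb{Q})_{\mathrm{tor}}$ yields $E_1(\mathbb{Q})\cong E_2(\mathbb{Q})$.

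For part (2), I would apply Lemma \ref{functor} directly to $F=\Sha(-/\mathbb{Q})$. Covariance and $F([n])=[n]$ are standard, since $\Sha(E/\mathbb{Q})=\ker\bigl(H^1(\mathbb{Q},E)\to\prod_v H^1(\mathbb{Q}_v,E)\bigr)$ is functorial in $E$ and $[n]$ acts as multiplication by $n$ on cohomology. The one hypothesis needing justification is that $F(E)=\Sha(E/\mathbb{Q})$ is torsion, and this holds unconditionally: $E(\overline{\mathbb{Q}})$ is a discrete $G_{\mathbb{Q}}$-module, so every class in $H^1(G_{\mathbb{Q}},E(\overline{\mathbb{Q}}))$ is inflated from $H^1(\mathrm{Gal}(L/\mathbb{Q}),E(L))$ for some finite Galois $L/\mathbb{Q}$ and is therefore annihilated by $[L:\mathbb{Q}]$; since $\Sha(E/\mathbb{Q})\subseteq H^1(\mathbb{Q},E)$, it is torsion. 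The hypothesis $\Sha(E_1/\mathbb{Q})[p^{\infty}]\cong\Sha(E_2/\mathbb{Q})[p^{\infty}]$ then feeds straight into the lemma to give $\Sha(E_1/\mathbb{Q})\cong\Sha(E_2/\mathbb{Q})$.

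The main obstacle is conceptual rather than computational, and it is exactly what the statement of (2) stresses: the conclusion must hold \emph{without} assuming finiteness of $\Sha$. The proof of Lemma \ref{functor} works by splitting a torsion group into its $q$-primary components, matching them for $q\neq p$ (where the degree-$p$ isogeny is invertible) and using the hypothesis at $q=p$; this decomposition is legitimate for \emph{any} torsion abelian group, finite or not. Thus the delicate input is precisely the unconditional torsion-ness of $\Sha$ established above (and the trivial torsion-ness of $E(\mathbb{Q})_{\mathrm{tor}}$ in part (1)), which is what renders the finiteness assumption unnecessary.
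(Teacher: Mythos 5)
Your proposal is correct and follows essentially the same route as the paper: both parts are reduced to Lemma \ref{functor} (applied to the torsion-subgroup functor for part (1) and to $\Sha(-/\mathbb{Q})$ for part (2)), with rank invariance under isogeny plus the Mordell--Weil structure theorem finishing part (1). Your only cosmetic deviation is justifying that $\Sha$ is torsion via the inflation description of $H^1(G_{\mathbb{Q}},E(\overline{\mathbb{Q}}))$ rather than the paper's trivialization-over-a-degree-$n$-extension (restriction--corestriction) remark; both are standard and equivalent, and your explicit choice of $F(E)=E(\mathbb{Q})_{\mathrm{tor}}$ merely spells out what the paper leaves implicit.
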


\begin{proof}(1) The rank is invariant under isogeny. By Lemma \ref{functor}, the torsion parts are also isomorphic. By the Mordell--Weil Theorem, $E_1(\Bbb{Q})\cong E_2(\Bbb{Q})$.\par

(2) For an isogeny \(f:A\to B\) of elliptic curves over \(\mathbb{Q}\),
functoriality of Galois cohomology induces homomorphisms
\[
f_*:H^1(\mathbb{Q},A)\to H^1(\mathbb{Q},B)
\quad\text{and}\quad
f_{v,*}:H^1(\mathbb{Q}_v,A)\to H^1(\mathbb{Q}_v,B)
\]
for every place \(v\) of \(\mathbb{Q}\). These maps are compatible with the
restriction maps. Hence \(f_*\) sends locally trivial classes to locally
trivial classes and induces a homomorphism
\[
f_*:\Sha(A/\mathbb{Q})\to \Sha(B/\mathbb{Q}).
\]
Thus \(A\mapsto \Sha(A/\mathbb{Q})\) is functorial in \(A\). Moreover, for
every integer \(n\ge1\), the morphism \([n]:A\to A\) induces multiplication
by \(n\) on \(\Sha(A/\mathbb{Q})\).

Any element $[C]$ of the Tate--Shafarevich group vanishes when lifted to some degree $n$ extension, we have $n \cdot [C] = 0$. Therefore, the Tate-Shafarevich group is a torsion group. By Lemma \ref{functor}, we have $\Sha(E_1/\mathbb{Q}) \cong \Sha(E_2/\mathbb{Q})$.
\end{proof}

\begin{proposition}\label{sufficient}
Let $E_1/\mathbb{Q}$ and $E_2/\mathbb{Q}$ be elliptic curves that are isogenous via a $2$-isogeny over $\mathbb{Q}$. Assume that
\begin{enumerate}
\item $E_1(\mathbb{Q})[2^\infty]\cong E_2(\mathbb{Q})[2^\infty]$;
\item $\Sha(E_1/\mathbb{Q})[2^\infty]\cong \Sha(E_2/\mathbb{Q})[2^\infty]$, $\operatorname{rank}_{\mathrm{an}}(E_1/\mathbb{Q})\equiv 0 \bmod{2}$, and $\operatorname{rank}(E_1/\mathbb{Q})=0$.

\item Suppose further that $E_1$ has good reduction at $2$, and that one of the following holds:
\begin{enumerate}
\item[(a)] $c_{E_1/\mathbb{Q}_p}=c_{E_2/\mathbb{Q}_p}$ for every prime $p$;
\item[(b)] $\Delta_{E_1}=\Delta_{E_2}$.
\end{enumerate}
\end{enumerate}
Then $\mathrm{BSD}(E_1/\mathbb{Q})=\mathrm{BSD}(E_2/\mathbb{Q})$. Moreover, in case \textup{(b)}, the curves $E_1$ and $E_2$ have the same Kodaira symbols at every prime.

\end{proposition}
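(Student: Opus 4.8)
The plan is to verify the six entries of $\mathrm{BSD}(E_i/\mathbb{Q})$ one at a time, together with the Kodaira symbols, most of which follow immediately from the results already established. The $L$-functions agree because $E_1$ and $E_2$ are isogenous over $\mathbb{Q}$, hence share the same $\ell$-adic representation and local factors. The Mordell--Weil groups agree by Proposition~\ref{sha}(1) applied to hypothesis~(1) with $p=2$, and the Tate--Shafarevich groups agree by Proposition~\ref{sha}(2) applied to hypothesis~(2) with $p=2$. Since $\mathrm{rank}\,E_1(\mathbb{Q})=0$ and rank is an isogeny invariant, both curves have rank $0$, so the regulator, a determinant over an empty basis, equals $1$ for each; thus $\mathrm{Reg}(E_1/\mathbb{Q})=\mathrm{Reg}(E_2/\mathbb{Q})=1$. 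Finally, hypothesis~(3) supplies exactly the input of Theorem~\ref{dok} ($E_1$ good at $2$ and $\Delta_{E_1}=\Delta_{E_2}$), so $E_1$ and $E_2$ have the same Kodaira symbol at every prime; this also settles the last assertion of the proposition.

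The substantive step is the equality of Tamagawa numbers $c_{E_1/\mathbb{Q}_l}=c_{E_2/\mathbb{Q}_l}$ at every prime $l$, which I would organize by reduction type. At $l=2$ (and at every prime of good reduction) both curves have good reduction—$E_2$ because it is isogenous to $E_1$, hence has the same conductor—so the Tamagawa number is $1$. Having the same Kodaira symbol everywhere and potentially good reduction at all odd primes (Theorem~\ref{dok}, via Lemma 2.16 of \cite{twin}) reduces the problem to the additive, potentially good types at odd $l$. For types $II,II^*$ the Tamagawa number is always $1$ and for $III,III^*$ it is always $2$, so equality holds from the common Kodaira symbol alone.

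For the remaining types I would argue separately. For $IV$ and $IV^*$ the component group $\Phi_{E_i}$ is cyclic of order $3$; the degree-$2$ isogeny $\phi$ induces a $\mathrm{Gal}(\overline{\mathbb{F}_l}/\mathbb{F}_l)$-equivariant map $\phi_{\ast}\colon\Phi_{E_1}\to\Phi_{E_2}$, and from $\hat{\phi}\circ\phi=[2]$ we get $\hat{\phi}_{\ast}\circ\phi_{\ast}=[2]$, which is an automorphism of $\Phi_{E_1}$ since $\gcd(2,3)=1$; hence $\phi_{\ast}$ is a Galois-equivariant isomorphism and the numbers of rational components agree, giving $c_{E_1/\mathbb{Q}_l}=c_{E_2/\mathbb{Q}_l}$. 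The one genuinely delicate case is $I_0^*$, where the component group is $(\mathbb{Z}/2\mathbb{Z})^2$ and the Tamagawa number may be $1$, $2$, or $4$; here the $2$-power degree of $\phi$ gives no control on the $2$-primary component group. For this type $E_1$ acquires good reduction over a ramified quadratic extension $F/\mathbb{Q}_l$, and since $\Delta_{E_1}/\Delta_{E_2}=1$ lies in the norm group $N_{F/\mathbb{Q}_l}$, Theorem~\ref{kodaira} yields $c_{E_1/\mathbb{Q}_l}=c_{E_2/\mathbb{Q}_l}$.

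With all Tamagawa numbers equal, I would finish with the real period: Proposition~\ref{period} (with $p=2$) applies because its three hypotheses—degree-$2$ isogeny, equal Tamagawa numbers at all primes, and $\mathrm{rank}_{an}(E_1/\mathbb{Q})\equiv 0\pmod 2$ from hypothesis~(2)—are now in place, giving $\Omega_{E_1}=\Omega_{E_2}$. Assembling the six agreements yields $\mathrm{BSD}(E_1/\mathbb{Q})=\mathrm{BSD}(E_2/\mathbb{Q})$. I expect the main obstacle to be precisely the Tamagawa comparison at additive primes: the clean component-group argument works only when $\#\Phi_{E_i}$ is odd (types $IV,IV^*$), so the type $I_0^*$ case must be routed through Theorem~\ref{kodaira} and genuinely uses the hypothesis $\Delta_{E_1}=\Delta_{E_2}$ rather than mere isogeny invariance; one should also confirm that no other additive type with even, non-constant Tamagawa number can occur at an odd prime under potentially good reduction.
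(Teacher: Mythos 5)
Your proposal is correct, and its skeleton coincides with the paper's proof: $L$-functions from isogeny, Mordell--Weil and $\Sha$ from Proposition~\ref{sha} with $p=2$, regulator $=1$ from rank $0$, Kodaira symbols from Theorem~\ref{dok}, and the period from Proposition~\ref{period} once the Tamagawa numbers match. The one place you genuinely diverge is the Tamagawa comparison. The paper disposes of it in a single line, citing Theorem~\ref{kodaira} for all $l\ge 3$ on the grounds that $\Delta_{E_1}/\Delta_{E_2}=1$ is a norm; you instead split by reduction type, noting good reduction at $2$ (conductor is an isogeny invariant) and potentially good reduction at odd $l$, then handling $II,II^*$ ($\Phi$ trivial, $c=1$), $III,III^*$ ($\Phi\cong\mathbb{Z}/2\mathbb{Z}$ with forced trivial Galois action, $c=2$), $IV,IV^*$ (the degree-$2$ isogeny induces a Galois-equivariant isomorphism on the order-$3$ component groups since $[2]$ is an automorphism), and reserving the norm criterion of Theorem~\ref{kodaira} for $I_0^*$ alone. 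This refinement buys real precision: Theorem~\ref{kodaira} as stated presupposes a \emph{quadratic} extension $F/\mathbb{Q}_l$ over which $E_1$ has good reduction, which among the additive potentially good types at odd $l$ exists only for $I_0^*$ (the semistability defect is $3$, $4$, or $6$ for $II,III,IV$ and their duals), so the paper's blanket citation is literally applicable only in your $I_0^*$ case and otherwise rests implicitly on the fuller statement in \cite{dokchitser} (Table 1 and Theorem 6.1), which covers all types. Your routing either avoids that theorem where its stated hypothesis fails or, in the $IV,IV^*$ cases, replaces it by an elementary component-group argument, and your closing check---that $I_n^*$ with $n\ge 1$ is potentially multiplicative and hence excluded at odd $l$---is exactly the point needed to see the case list is exhaustive.
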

\begin{proof}
Since $E_1$ and $E_2$ are isogenous over $\Bbb{Q}$, $L(E_1,s)=L(E_2,s)$. By Proposition \ref{sha}, we have $E_1(\Bbb{Q})\cong E_2(\Bbb{Q})$ and $\Sha(E_1/\Bbb{Q})\cong \Sha(E_2/\Bbb{Q})$.  Because $\mathrm{rank}(E_i/\Bbb{Q})=0$, $\mathrm{Reg}(E_i/\Bbb{Q})=1$ for $i\in \{1,2\}$. Thus, in case \textup{(a)}, $\Omega_{E_1}=\Omega_{E_2}$ follows from Theorem \ref{period}. Thus, we have $\mathrm{BSD}(E_1/\mathbb{Q})=\mathrm{BSD}(E_2/\mathbb{Q})$. In case \textup{(b)}, by Theorem \ref{kodaira}, Tamagawa numbers at $\ell \ge 3$ are the same because $\Delta_{E_1}=\Delta_{E_2}$. Thus, we have $\Omega_{E_1}=\Omega_{E_2}$ follows from Theorem \ref{period}. By Theorem \ref{dok}, the Kodaira symbols are the same.

\end{proof}

\begin{example}\label{example}
An example of a pair $(E_1,E_2)$ satisfying conditions 1,2, and 3 of Proposition~\ref{sufficient} is
\[
\left(
y^2=x^3+25350x^2+2471625x,\ 
y^2=x^3-50700x^2+632736000x
\right).
\]
The LMFDB labels of $E_1$ and $E_2$
are 38025.ck1 and 38025.ck2, respectively. Indeed, for $(1)$, $E_1(\Bbb{Q})[2^{\infty}]\cong E_2(\Bbb{Q})[2^{\infty}]$. For $(2)$, a Sage computation shows that, for $i=1,2$, $\Sel^2(E_i) \cong \mathbb{Z}/2\mathbb{Z},
\mathrm{rank}(E_i/\mathbb{Q})=0,
E_i(\mathbb{Q})[2]\cong \mathbb{Z}/2\mathbb{Z}$. Hence $E_i(\mathbb{Q})/2E_i(\mathbb{Q})\cong \mathbb{Z}/2\mathbb{Z}$,
and therefore $\Sha(E_1/\mathbb{Q})[2]=\Sha(E_2/\mathbb{Q})[2]=0$
and thus $\Sha(E_1/\mathbb{Q})[2^{\infty}]\cong \Sha(E_2/\mathbb{Q})[2^{\infty}]=0$. Also, a Sage computation shows that $\operatorname{rank}_{\mathrm{an}}(E_1/\mathbb Q)=0$. For $(3)$, $\Delta_{E_1}=\Delta_{E_2}=3^6\cdot 5^9 \cdot 13^9$. 
This is the unique pair of 2-isogenous elliptic curves that shares the same minimal discriminants, up to twists. Indeed, the pair \((38025.\mathrm{ck}1,\,38025.\mathrm{ck}2)\) is the quadratic
twist by \(-195\) of the pair \((4225.\mathrm{h}1,\,4225.\mathrm{h}2)\) of Theorem \ref{discriminanttwins}.
We use this twist because \(4225.\mathrm{h}1\) has odd analytic rank; by
Theorem~\ref{period}, the original pair therefore does not have the same real
period and hence is not a BSD twin.\end{example}

The following is a generalization of Example \ref{example}. Note that the pair \((38025.\mathrm{ck}1, 38025.\mathrm{ck}2)\) in Example \ref{example} coincides with the case \(m=1\).

\begin{lemma}\label{discriminantratio}
Let \(m\) be a prime such that \(m\nmid 2\cdot 3\cdot 5\cdot 13\), and set $n\coloneqq m^2+64$. Let $E_m:\ y^2=x^3+390n x^2+195^2m^2n\,x$
and $E'_m:\ y^2=x^3-195n x^2+16\cdot195^2n\,x$. Then
\[
\frac{\Delta_{E_m}}{\Delta_{E'_m}}=m^2.
\]

\end{lemma}

\begin{proof}
The discriminant of a Weierstrass equation of the form $y^2=x^3+Ax^2+Bx$ is $\Delta=16B^2(A^2-4B)$. Applying this formula to the displayed equations of \(E_m\) and \(E'_m\), we obtain $\Delta^{\mathrm{eq}}_{E_m}=2^{12}195^6m^4n^3$ and $\Delta^{\mathrm{eq}}_{E'_m}=2^{12}195^6m^2n^3$.
Hence
\[
\frac{\Delta^{\mathrm{eq}}_{E_m}}{\Delta^{\mathrm{eq}}_{E'_m}}=m^2.
\]

Fix a prime \(p\). It is enough to show that, for every prime \(p\), the same power
of \(p^{12}\) is removed from the two displayed discriminants when passing to
minimal models.

At \(p=2\), the change of variables $x=4X$, $y=8Y+4X$ gives
\[
Y^2+XY=X^3-\frac{195m^2+12481}{4}X^2+195^2n\,X,
\]
whose discriminant is $195^6m^2n^3$, which is odd. Thus $E'_m$ has good reduction at $2$. Therefore $v_2(\Delta_{E_m})=v_2(\Delta_{E'_m})=0$.
Since $v_2(\Delta^{\mathrm{eq}}_{E_m})
=
v_2(\Delta^{\mathrm{eq}}_{E'_m})
=12$,
the same factor \(2^{12}\) is removed from both displayed discriminants.

Next consider \(p=m\). Since \(m\nmid 195\) and $n=m^2+64\equiv 64\not\equiv 0 \bmod m$,
we have $v_m(\Delta^{\mathrm{eq}}_{E_m})=4$
and $v_m(\Delta^{\mathrm{eq}}_{E'_m})=2$. Since both valuations are \(<12\), both displayed equations are \(m\)-minimal.

It remains to consider an odd prime \(p\neq m\). Set
\[
k_p:=\left\lfloor \frac{v_p(\Delta^{\mathrm{eq}}_{E_m})}{12}\right\rfloor
=
\left\lfloor \frac{v_p(\Delta^{\mathrm{eq}}_{E'_m})}{12}\right\rfloor
=
\left\lfloor \frac{v_p(n)+2v_p(195)}{4}\right\rfloor .
\]
We claim that the change of variables $x=p^{2k_p}X, y=p^{3k_p}Y$
gives integral Weierstrass equations for both curves. Indeed, after this change, the equation $y^2=x^3+Ax^2+Bx$ becomes $Y^2=X^3+p^{-2k_p}AX^2+p^{-4k_p}BX$.
Thus, we have 
\[
v_p(390n)=v_p(n)+v_p(195)\ge\frac{v_p(n)+2v_p(195)}{2}\ge 2k_p,\]

\[ v_p(195^2m^2n)=2v_p(195)+v_p(n)\ge 4k_p.
\]

Hence the transformed equations are integral for both \(E_m\) and \(E'_m\).

This change decreases both discriminant valuations by \(12k_p\), and the remaining
valuations are \(<12\). Hence the transformed equations are \(p\)-minimal, so exactly
the same factor \(p^{12k_p}\) is removed from both displayed discriminants.

Therefore
\[
\frac{\Delta_{E_m}}{\Delta_{E'_m}}
=
\frac{\Delta^{\mathrm{eq}}_{E_m}}{\Delta^{\mathrm{eq}}_{E'_m}}
=m^2.\] \end{proof}

\begin{proposition}\label{trueexample}

Let \(m\) be either \(1\) or a prime satisfying
$\left(\frac{390}{m}\right)=-1$, and set \(n\coloneqq m^2+64\). Let $E_m$ and $E'_m$ be the elliptic curves defined by
\[
E_m:\ y^2=x^3+390n x^2+195^2m^2n\,x,
\qquad
E'_m:\ y^2=x^3-195n x^2+16\cdot 195^2n\,x.
\]
Then $j(E_m)\neq j(E'_m)$, and as $m$ varies, the pairs $(j(E_m),j(E'_m))$ are pairwise distinct.

The $2$-isogeny
\[
\hat{\phi}:E'_m\longrightarrow E_m,\qquad
(x,y)\longmapsto
\left(
\frac{y^2}{x^2},
\frac{y\left(16\cdot 195^2 n-x^2\right)}{x^2}
\right)
\]
is balanced.

Moreover, we have $c_{E_m/\Bbb{Q}_p}=c_{E'_m/\Bbb{Q}_p}\qquad\text{for every prime }p$.
\end{proposition}

\begin{proof}

The case \(m=1\) has already been treated in Example~\ref{example}. We may therefore assume that \(m\neq 1\).

The direct calculation shows 
\[
j(E_m)=\frac{(m^2+256)^3}{m^4},
\qquad
j(E'_m)=\frac{(m^2+16)^3}{m^2},
\]
and
\[
j(E_m)-j(E'_m)
=
-\frac{(m-8)(m+8)(m^2+64)(m^2-9m+64)(m^2+9m+64)}{m^4}.
\]
In particular, $j(E_m)\neq j(E'_m)$ for every such $m$. 

The pairs of \(j\)-invariants are pairwise distinct as \(m\) varies. Indeed,
\(j(E'_m)=\dfrac{(m^2+16)^3}{m^2}\) is in lowest terms, since
\(\gcd(m,m^2+16)=1\). Hence its denominator determines \(m\).

The $2$-division polynomials are $x\bigl(x^2+390nx+195^2m^2n\bigr)
\quad\text{and}\quad
x\bigl(x^2-195nx+16\cdot 195^2n\bigr)$.
Their discriminants are $256\cdot 195^2n$ and $195^2m^2n$, respectively, so
\[
\mathbb Q(E_m[2])=\mathbb Q(E'_m[2])=\mathbb Q(\sqrt n).
\]
Hence the isogeny is balanced.

For $p=2$, $E'_m$ has good reduction at $2$, so $c_{E'_m/\Bbb{Q}_2}=1$ by the proof of Lemma \ref{discriminantratio}.

We have  
\[
\frac{\Delta_{E_m}}{\Delta_{E'_m}}=m^2\in\mathbb Q^{\times 2}.
\] by Lemma \ref{discriminantratio}.

We now compare Tamagawa numbers.

Since $E_m$ and $E'_m$ are isogenous over $\Bbb{Q}$, $E_m$ also has good reduction at $2$, so $c_{E_m/\Bbb{Q}_2}=1$.

Next let \(q\) be an odd prime dividing \(195n\) with \(q\neq m\).
Since \(q\nmid m\), both
\[
j(E_m)=\frac{(m^2+256)^3}{m^4},
\qquad
j(E'_m)=\frac{(m^2+16)^3}{m^2}
\]
are \(q\)-integral, so both curves have potentially good reduction at \(q\). Since $\frac{\Delta_{E_m}}{\Delta_{E'_m}}=m^2 \in N_{F/\Bbb{Q}_{q}}(F^{\times})$, the same argument as in the proof of Theorem \ref{kodaira} yields
\[
c_{E_m/\Bbb{Q}_q}=c_{E'_m/\Bbb{Q}_q}.
\]

Finally, assume $m\neq 5,13$ and set $q=m$. Since $q\nmid 195$ and $q\nmid n$, reducing modulo $q$ gives
\[
E'_m:\ y^2=x^3-195\cdot 64\,x^2+16\cdot 195^2\cdot 64\,x
      =x(x-6240)^2,
\]
and
\[
E_m:\ y^2=x^3+390\cdot 64\,x^2
      =x^2(x+24960).
\]
Thus both curves have multiplicative reduction at $q$, and the reduction types are $I_4$ and $I_2$, respectively. 

The fact that the slope of the tangent line does not lie in \(\mathbb{F}_m\), equivalently, that the reduction is non-split multiplicative, is equivalent to
\[
\left(\frac{390}{m}\right)=-1.
\]

Hence $c_{E_m/\Bbb{Q}_m}=2, c_{E'_m/\Bbb{Q}_m}=2$.
Combining all cases, we obtain
\[
c_{E_m/\Bbb{Q}_p}=c_{E'_m/\Bbb{Q}_p} \ \text{for every prime }p.
\]

\FloatBarrier

\begin{table}[htbp]
\centering
\caption{Reduction types of $E_m$ and $E'_m$ }
\label{tab:reduction-types-Em}
\small
\setlength{\tabcolsep}{4pt}
\renewcommand{\arraystretch}{1.08}
\begin{tabular}{c|c|c}
\hline
prime $p$ & $E_m$ & $E'_m$ \\
\hline
$2$ & good & good \\
$q\mid 195n,\ q\neq m$ & potentially good & potentially good \\
$p=m$ & multiplicative of type $I_4$ & multiplicative of type $I_2$ \\
$p\nmid 195mn$ & good & good \\
\hline
\end{tabular}

\vspace{1mm}
\parbox{0.92\linewidth}{\footnotesize
Here, $\left(\frac{390}{m}\right)=-1$, and $n=m^2+64$.
Moreover, $c_{E_m/\Bbb{Q}_2}=c_{{E'_m/\Bbb{Q}_2}}=1$, one has $c_{E_m/\Bbb{Q}_q}=c_{E'_m/\Bbb{Q}_q}$ for every prime
$q\mid 195n$ with $q\neq m$, and at $p=m$ both curves have non-split multiplicative
reduction, so $c_{E_m/\Bbb{Q}_m}=c_{E'_m/\Bbb{Q}_m}=2$.
}
\end{table}

\FloatBarrier

\end{proof}

\begin{theorem}\label{thm:twists}
Suppose $E_i/\mathbb{Q} \ (i\in \{1,2\})$ is an elliptic curve with $E_i(\mathbb{Q})[2] \simeq \mathbb{Z}/2\mathbb{Z}$. Suppose that $E_1$ and $E_2$ are isogenous via a balanced $2$-isogeny
$\phi\colon E_1 \to E_2$ defined over $\mathbb{Q}$. Suppose that  $\# \mathrm{Sel}^{\phi}(E_1)=\# \mathrm{Sel}^{\hat{\phi}}(E_2)$. Also, suppose that $E_1$ has good reduction at $2$ and that one of the following holds:
\begin{enumerate}
\item[(a)] $c_{E_1/\mathbb{Q}_p}=c_{E_2/\mathbb{Q}_p}$ for every prime $p$;
\item[(b)] $\Delta_{E_1}=\Delta_{E_2}$.
\end{enumerate}

Then, there exist infinitely many square-free integers $D$ such that $\text{BSD}(E^D_1/\mathbb{Q})=\text{BSD}(E^D_2/\mathbb{Q})$. Moreover, in case \textup{(b)}, the curves $E^D_1$ and $E^D_2$ have the same Kodaira symbols at every prime.
\end{theorem}
\begin{proof}

By Proposition~\ref{sufficient}, it suffices to show that there are infinitely many square-free integers \(D\) such that \(E_1^D\) and \(E_2^D\) are $2$-isogenous over \(\mathbb{Q}\) and satisfy conditions 1, 2, and 3 of that proposition.

For square-free integers $D$, $E^D_1$ and $E^D_2$ are $2$-isogenous over $\Bbb{Q}$. Choose \(D\) among the infinitely many square-free integers given by Proposition \ref{twistsha}. We show that, for all but finitely many such \(D\), the following conditions hold.

\begin{enumerate}
    \item $E^D_1(\mathbb{Q})[2^{\infty}]\cong E^D_2(\mathbb{Q})[2^{\infty}]$.
    \item $\Sha(E^D_1/\mathbb{Q})[2^{\infty}]\cong \Sha(E^D_2/\mathbb{Q})[2^{\infty}]=0$, \\[-1pt]
$\operatorname{rank}_{an}(E^D_1/\mathbb{Q})\equiv 0 \bmod 2$ and $\mathrm{rank}(E^D_1/\Bbb{Q})=0$.
    \item $E^D_1$ has good reduction at $2$, $c_{E^D_1/\mathbb{Q}_p}=c_{E^D_2/\mathbb{Q}_p}$ for every prime $p$ in case \textup{(a)}    
    and $\Delta_{E^D_1}=\Delta_{E^D_2}$ in case \textup{(b)}.
\end{enumerate}

For condition $1$, $E_1^D(\Bbb{Q})[2]\cong E_2^D(\Bbb{Q})[2]$. By Theorem \ref{tor}, there are finitely many $D$ such that $E^D_1(\Bbb{Q})_{\text{tor}}$ or $E^D_2(\mathbb{Q})_{\text{tor}}$ acquire points of order greater than 2. Thus, for almost all square-free integers $D$, we have $E^D_1(\Bbb{Q})[2^{\infty}]\cong E^D_2(\Bbb{Q})[2^{\infty}]$. 

For condition $2$,  $\# \mathrm{Sel}^{\phi}(E^D_1)=\# \mathrm{Sel}^{\hat{\phi}}(E^D_2)$ by Proposition \ref{tamagawaratio}. By Proposition \ref{twistsha}, infinitely many square-free integers $D$ satisfy $\Sha(E^D_1/\Bbb{Q})[2]=\Sha(E^D_2/\Bbb{Q})[2]=0$, $(D, 2\Delta_{E_1}\Delta_{E_2})=1$, $D\equiv 1 \bmod 8$, $\mathrm{rank}(E_i^D/\Bbb{Q})=0 \ (i=1,2)$ and $\mathrm{rank}_{an}(E^D_1/\Bbb{Q})\equiv 0 \bmod 2$. For such $D$, by Proposition \ref{sha} (2), $\Sha(E^D_1/\Bbb{Q})\cong \Sha(E^D_2/\Bbb{Q})$.

For condition \(3\), in case \textup{(a)}, for any prime \(p \nmid 2\Delta_{E_1}D\), the curve \(E_1^D\) has good reduction at \(p\), since \((D,2\Delta_{E_1})=1\).

For any prime \(p\mid 2\Delta_{E_1}\Delta_{E_2}\), our choice of \(D\) implies that \(D\) is a square in \(\mathbb{Q}_p^\times\). Hence \(E_i^D\) is isomorphic to \(E_i\) over \(\mathbb{Q}_p\) for \(i=1,2\). Therefore
\[
c_{E_1^D/\mathbb{Q}_p}
=
c_{E_1/\mathbb{Q}_p}
=
c_{E_2/\mathbb{Q}_p}
=
c_{E_2^D/\mathbb{Q}_p}.
\]

For \(p=2\), both \(E_1\) and \(E_1^D\) have good reduction at \(2\), since \(D \equiv 1 \bmod 8\). For any prime \(p \mid D\), the curve \(E_1/\mathbb{Q}\) has good reduction at the odd prime \(p\), and \(v_p(D)=1\). Hence
\[
W_p(\phi,D)\cong \mathbb{Z}/2\mathbb{Z}
\]
(see Section 3.1 of \cite{smith}). For any prime $p\neq 2$, Lemma~3.8 of \cite{Sc}
(or, alternatively, Lemma~2.1(ii) of \cite{zy}), we have $\frac{c_{E^D_2/\Bbb{Q}_p}}{c_{E^D_1/\Bbb{Q}_p}}=\frac{1}{2} \#W_p(\phi, D)=1$. Thus, for all prime numbers \(p\), we have
\[
c_{E_1^D/\mathbb{Q}_p}
=
c_{E_2^D/\mathbb{Q}_p}.
\]

In case \textup{(b)}, since $(D, 2\Delta_{E_1}\Delta_{E_2})=1$ and $D\equiv 1 \bmod 8$, $E^D_1$ has good reduction at $2$ by Proposition 2.4 (2)(a) \cite{pal}. Since $\Delta_{E_1}=\Delta_{E_2}$, $j(E^D_i)=j(E_i)\in \Bbb{Z}$ for $i=1,2$ by Lemma 2.11 of \cite{twin}. Therefore, $E^D_1$ and $E^D_2$ have the same Kodaira symbols at every prime by Theorem 5.4 (1) of \cite{dokchitser}. Thus, by Theorem \ref{dok} and since quadratic twisting preserves the sign of the discriminant, we have
\(\Delta_{E_1^D}=\Delta_{E_2^D}\).

\end{proof}

\begin{theorem}\label{j}

Take $E_m,E'_m$ as in Proposition \ref{trueexample}. If $\mathrm{rank}_{an}(E_m/\mathbb{Q})\equiv 0 \bmod 2$ (for example, $m=7,19,23,37,43, 53$), then there exist infinitely many square-free integers $D$ such that the pair $(E^D_m,E'^D_m)$ satisfies
\[
j(E^D_m)\neq j(E'^D_m)
\quad\text{and}\quad
\operatorname{BSD}(E^D_m/\mathbb{Q})=\operatorname{BSD}(E'^D_m/\mathbb{Q}),
\]
and the pairs $(j(E^D_m),j(E'^D_m))=(j(E_m),j(E_m'))$ are pairwise distinct if we vary $m$.
\end{theorem}
\begin{proof}
Now $E_m$ and $E'_m$ are connected by a balanced $2$-isogeny over $\Bbb{Q}$, and $E_m(\Bbb{Q})[2]\cong E'_m(\Bbb{Q})[2]\cong \Bbb{Z}/2\Bbb{Z}$ since $n$ is not a square.

Moreover, they have the same Tamagawa numbers by Proposition ~\ref{trueexample}.

By Theorem~1 and Remark~1 of \cite{Kloosterman}, we have

\[
\frac{\#\Sel^{\phi}(E_m)}{\#\Sel^{\hat{\phi}}(E'_m)}
=
\frac{
\#E_m(\mathbb{Q})[\phi]\,\Omega_{E'_m}\,\prod_{p} c_{E'_m/\mathbb{Q}_p}
}{
\#E'_m(\mathbb{Q})[\hat{\phi}]\,\Omega_{E_m}\,\prod_{p} c_{E_m/\mathbb{Q}_p}
}.
\]

Since $\mathrm{rank}_{\mathrm{an}}(E_m/\mathbb{Q})\equiv 0\bmod 2 $, we have $\Omega_{E_m}=\Omega_{E'_m}$ by Theorem~\ref{period}. By Proposition \ref{trueexample}, $c_{E_m/\Bbb{Q}_p}=c_{E'_m/\mathbb{Q}_p}$. This proves that $\tau_1=1$. By applying Theorem~\ref{thm:twists}, we obtain infinitely many square-free integers $D$ such that
\[
\mathrm{BSD}(E_m^D/\mathbb{Q})=\mathrm{BSD}(E_m'^D/\mathbb{Q}).
\]
Thus, the pairs $(E_m^D,E_m'^D)$ give the desired examples. 
If we vary $m$, Proposition~\ref{trueexample} implies that these pairs of $j$-invariants are pairwise distinct.
\end{proof}

In the case \(m=1\), the pair \((E_m,E'_m)\) cannot be distinguished even by the finer local data, namely the Kodaira symbols and the minimal discriminants.

\begin{theorem}\label{maintheorem}
There exist infinitely many pairs $(E_1,E_2)$ of non-isomorphic elliptic curves over $\mathbb{Q}$ such that $j(E_1)\neq j(E_2)$, $\text{BSD}(E_1/\mathbb{Q})=\text{BSD}(E_2/\mathbb{Q})$, and the Kodaira symbols at every prime and the minimal discriminants are the same.
\end{theorem}
\begin{proof}
The pairs in Example \ref{example} satisfy the hypothesis of Proposition \ref{sufficient} and Theorem \ref{thm:twists}. By Theorem \ref{thm:twists}, infinitely many pairs $(E^D_1, E^D_2)$ are the pairs we have sought.
\end{proof}

\

\FloatBarrier

\section{Explicit infinite families of pairs}

In this section, we explicitly provide infinitely many integers $D$ such that, for the pair \[(38025.\mathrm{ck}1,\,38025.\mathrm{ck}2)\]---the case $m=1$ of Example \ref{startingpair}---the 2-primary parts of the Tate--Shafarevich groups are simultaneously trivial.

\subsection{Simultaneous trivialization of the 2-part of the Tate--Shafarevich groups}

The following lemma enables us to control $\Sha(E_1^D/\mathbb{Q})[2]$ and $\Sha(E_2^D/\mathbb{Q})[2]$ simultaneously. Indeed, by this lemma, once we prove that $\mathrm{Sel}^{\phi}(E_1^D)=\mathbb{Z}/2\mathbb{Z}$, it follows that $\mathrm{Sel}^{\hat{\phi}}(E_2^D)=\mathbb{Z}/2\mathbb{Z}$. Then, by the same argument as in Proposition \ref{twistsha}, we obtain $\Sha(E_1^D/\mathbb{Q})[2]=\Sha(E_2^D/\mathbb{Q})[2]=0$.

\begin{lemma}\label{taud=1}
Let
\[
E_1: y^2=x^3+25350x^2+2471625x
\quad\text{and}\quad
E_2: y^2=x^3-50700x^2+632736000x
\]
be the elliptic curves with LMFDB labels \(38025.\mathrm{ck}1\) and
\(38025.\mathrm{ck}2\), respectively.  They are connected by the $2$-isogeny
\[
\phi:E_1\longrightarrow E_2,\qquad
(x,y)\longmapsto
\left(\frac{y^2}{x^2},\frac{y(2471625-x^2)}{x^2}\right)
\]
defined over \(\mathbb Q\).

Let \(D\) be a square-free integer such that
\[
\gcd(D,2\cdot3\cdot5\cdot13)=1,\ D>0,\  D\equiv 1 \bmod 8.
\]
Then \(\tau_D=1\).
\end{lemma}
\begin{proof}

By Theorem~1 and Remark~1 of \cite{Kloosterman}, we have
\[
\tau_D=
\frac{  \#E_1^D(\mathbb{Q})[\phi] \,\Omega_{E_2^D}\,\prod_{p} c_{E_2^D/\mathbb{Q}_p}}
{ \#E_2^D(\mathbb{Q})[\hat{\phi}] \,\Omega_{E_1^D}\,\prod_{p} c_{E_1^D/\mathbb{Q}_p}}.
\]

By the argument in case (b) of the proof of Theorem \ref{thm:twists} the assumptions $\mathrm{gcd}(D,2\Delta_{E_1}\Delta_{E_2})=1, D\equiv 1 \bmod 8$, together with \(\Delta_{E_1}=\Delta_{E_2}\), imply $\Delta_{E_1^D}=\Delta_{E_2^D}$. By Theorem~\ref{kodaira}, we have $c_{E_1^D/\mathbb{Q}_p}=c_{E_2^D/\mathbb{Q}_p}$ for every odd prime $p$ and $E^D_i \ (i=1,2)$ have good reduction at $2$.

By Theorem~\ref{period}, $\Omega_{E_1}=\Omega_{E_2}$ since $\mathrm{rank}_{an}(E_i/\Bbb{Q})\equiv 0 \bmod 2 \ (i=1,2)$ by LMFDB. We have $\Omega_{E^D_1}=\Omega_{E^D_2}$ since $\Omega_{E^D_i}=\dfrac{\Omega_{E_i}}{\sqrt{D}}\ (i=1,2)$ follows from 
Corollary 2.6 and Theorem 3.2 in \cite{pal}. 

Moreover, $\#E_1^D(\mathbb{Q})[\phi]=\#E_2^D(\mathbb{Q})[\hat{\phi}]=2$. Therefore, we have $\tau_D=1$.

\end{proof}

\begin{remark}
One may also check this directly by computing the local
factors by Proposition \ref{tamagawaratio}:

By Proposition \ref{tamagawaratio}, $\tau_D=\prod_p \frac{1}{2} \#W_p(\phi, D)$.

For any prime $p\neq 2$, Lemma~3.8 of \cite{Sc}
(or, alternatively, Lemma~2.1(ii) of \cite{zy}), we have $\frac{1}{2} \#W_p(\phi, D)=\frac{c_{E^D_2/\Bbb{Q}_p}}{c_{E^D_1/\Bbb{Q}_p}}=1$ by Theorem \ref{kodaira}. For $p=\infty$ and $p=2$, we have $W_{\infty}(\phi,D)=W_{\infty}(\phi,1)$ and
$W_2(\phi,D)=W_2(\phi,1)$, since $D$ is a square in $\mathbb{R}^\times$ (as $D>0$)
and in $\mathbb{Q}_2^\times$ (as $D\equiv 1 \bmod 8$), respectively.
Hence it suffices to compute the local factors at $p=\infty$ and $p=2$ for $D=1$.\par 

\quad At the infinite place $p=\infty$, by Proposition~7.6 of \cite{dokchitser}, we have
$\frac{\#\mathrm{Ker}\ (E_1(\mathbb{R})\stackrel{\phi}{\to} E_2(\mathbb{R}))}
{\#\mathrm{Coker}\ (E_1(\mathbb{R})\stackrel{\phi}{\to} E_2(\mathbb{R}))}
=2$ as $a\coloneqq 25350>0, b\coloneqq 2471625>0, 4b<a^2$.
Since $\#\ker (E_1(\mathbb{R})\stackrel{\phi}{\to} E_2(\mathbb{R}))=2$, it follows that
$\frac{1}{2}\#W_{\infty}(\phi,1)
=\frac{1}{2}\#\operatorname{coker}(E_1(\mathbb{R})\stackrel{\phi}{\to} E_2(\mathbb{R}))
=\frac{1}{2}$. \quad For $p=2$, let $E_i^{\min}/\mathbb{Z}_2 (i=1,2)$ be the minimal model of $E_i$ and let
$\iota_i:E_i\to E_i^{\min}$ be the induced $\mathbb{Q}_2$-isomorphism; set
$\phi^{\min}:=\iota_2\circ\phi\circ\iota_1^{-1}$.
Then $\iota_2$ induces an isomorphism
$E_2(\mathbb{Q}_2)/\phi(E_1(\mathbb{Q}_2))\cong E_2^{\min}(\mathbb{Q}_2)/\phi^{\min}(E_1^{\min}(\mathbb{Q}_2))$,
hence $\#W_2(\phi,1)=\#W_2(\phi^{\min},1)$.
Therefore we may replace $(E_1,E_2,\phi)$ by their minimal models over $\mathbb{Z}_2$ (and keep the same notation). By Lemma~3.8 of \cite{Sc}, we have
$\#W_2(\phi,1)=\#(E_2(\mathbb{Q}_2)/\phi(E_1(\mathbb{Q}_2)))
=\#E_1(\mathbb{Q}_2)[\phi]\cdot
\left|\frac{\phi^*\omega_{E_2}}{\omega_{E_1}}\right|_{\mathbb{Q}_2}^{-1}\cdot
\frac{c_{E_2/\mathbb{Q}_2}}{c_{E_1/\mathbb{Q}_2}}$,
where $\phi^*$ denotes the pullback of a minimal differential by~$\phi$.
Moreover, $\#E_1(\mathbb{Q}_2)[\phi]\cdot
\left|\frac{\phi^*\omega_{E_2}}{\omega_{E_1}}\right|_{\mathbb{Q}_2}^{-1}\cdot
\frac{c_{E_2/\mathbb{Q}_2}}{c_{E_1/\mathbb{Q}_2}}$
is equal to $2\left|\frac{\phi^*\omega_{E_2}}{\omega_{E_1}}\right|_{\mathbb{Q}_2}^{-1}$.
Since $E_1$ has good ordinary reduction at $2$, by Proposition~4.8 of \cite{dokchitser}, we have
$\left| \frac{\phi^*\omega_{E_2}}{\omega_{E_1}}\right|_{\mathbb{Q}_2}^{-1}=2$ if $\mathrm{Ker}\phi$ lies in the formal group $\widehat{E}_1(2 \mathbb{Z}_2)$.
Let $P=(0,0)$ be the generator of $\mathrm{Ker}\phi$.
Under the change of variables from the model $y^2=x^3+25350x^2+2471625x$ to its minimal model $E_1^{\min}$,
$P$ is sent to $(x,y)=\left(\frac{8451}{4},-\frac{8451}{8}\right)$ by
$(x,y)\mapsto \left(\frac{x+8451}{4}, \frac{y-x-8451}{8}\right)$.
The local parameter $z = -x/y$ satisfies $v_2(z) = v_2(2) = 1 > 0$, which confirms that $P$ lies in $\widehat{E}_1(2\mathbb{Z}_2)$. \par

\quad Thus,
\[
\tau_D=
\left(\frac{1}{2}\#W_{\infty}(\phi,1)\right)
\left(\frac{1}{2}\#W_{2}(\phi,1)\right)
=\frac{1}{2}\times 2=1.
\]

\end{remark}

The following proposition gives an explicit way to choose a twisting parameter
\(D\) for the pair in Example~2.  The argument is not special to this numerical
pair: the essential idea is to choose \(D\) so that the local conditions
\[
W_v(\phi,D)=W_v(\phi,1)
\]
are preserved at all relevant places \(v\). In the previous version of this paper (arXiv v2), the argument was a case-by-case brute-force computation. The idea of streamlining the argument by comparing the local conditions \(W_v(\phi,D)\) with \(W_v(\phi,1)\) was suggested by the referee.

\begin{proposition}\label{pair}
Let $E_1, E_2$ be as in Lemma \ref{taud=1}, so that, in particular,
\[
\Sel^\phi(E_1)=\Sel^{\hat\phi}(E_2)=\Bbb{Z}/2\Bbb{Z}.
\]
Let $D$ be any square-free integer such that all of the following hold:
\begin{itemize}
    \item $\gcd(D,2\cdot3\cdot5\cdot13)=1$ and     
    $D>0,D\equiv 1 \bmod{8}$.    
    \item $D=1$ or $\left(\frac{65}{p}\right)=-1$ for all $p \mid D$.
\end{itemize}
Then
\[
\Sha(E_1^D/\Bbb{Q})[2]=\Sha(E_2^D/\Bbb{Q})[2]=0.
\]
\end{proposition}

\begin{proof}
Let us define \[S \coloneqq \{v \mid v \text{ divides } \deg\phi\} \cup \{v \mid v \text{ divides } \Delta_{E^D_1}\Delta_{E^D_2}\} \cup \{\infty\} = \{2, 3, 5, 13, \infty\} \cup \{p: \text{prime}\mid p\mid D\}\] and $\mathbb{Q}(S,2) \coloneqq \{\bar{d} \in \mathbb{Q}^{\times}/(\mathbb{Q}^{\times})^2 \mid v(d) \equiv 0 \bmod{2} \text{ for all } v \notin S\}$.

Via the following isomorphism, the Selmer group can be calculated by checking  the local 
solubility of torsors $C_d$. We have

\[
\mathrm{Sel}^{\phi}(E_1^D)\cong 
\left\{
  \bar{d}\in \Bbb{Q}(S,2)
  \;\middle|\;
  \begin{aligned}
    & \bar{C}_d(\Bbb{Q}_v)\neq \emptyset , \forall v\in S, \text{where } \ \bar{C}_d \ \text{is the projective closure of}\\
&\text{affine curve}\ C_d: dy^2=d^2-50700dDx^2+632736000D^2x^4 \ \text{in}\ \Bbb{P}^3   
  \end{aligned} \right\}\](see \cite{sil}, X, Remark 3.7 and Proposition 4.9). 
  
The projective closure $\bar{C}_d$ has two points at infinity $\infty=[0,0,\pm D\sqrt{\frac{632736000}{d}},1]$ in $\mathbb{P}^3$ (see \cite{sil}, X, Remark 3.7). Since $\sqrt{\frac{632736000}{65}} = 3120$, we note that \begin{equation}\label{eq:Selmer65}   
 65 \in \mathrm{Sel}^{\phi}(E_1^D/\mathbb{Q}).
\end{equation}

Since $D$ is a square in $\mathbb{Q}_p$ for all primes $p \in \{2,\infty\}$, $E_i^D$ and $E_i$ are isomorphic over $\mathbb{Q}_p$. We therefore have
\[
W_p(\phi,D)=W_p(\phi,1)
\]
for such $p$.

For any prime $p\neq 2$, Lemma~3.8 of \cite{Sc}
(or, alternatively, Lemma~2.1(ii) of \cite{zy}) gives
\[
\#W_p(\phi,D)
=
2\frac{c_{E^D_2/\mathbb{Q}_p}}{c_{E^D_1/\mathbb{Q}_p}}.
\]
By Theorem~\ref{kodaira}, we have
\[
\frac{c_{E^D_2/\mathbb{Q}_p}}{c_{E^D_1/\mathbb{Q}_p}}
=
\frac{c_{E_2/\mathbb{Q}_p}}{c_{E_1/\mathbb{Q}_p}}
=
1.
\]
We therefore obtain
\[
\#W_p(\phi,D)=2=\#W_p(\phi,1).
\]

For every odd prime \(p\in\{3,5,13\}\cup\{q:q\mid D\}\), the class
\(\operatorname{res}_p(65)\) is non-trivial in
\(\mathbb Q_p^\times/\mathbb Q_p^{\times 2}\). Indeed, for \(p=5,13\) this
follows from \(v_p(65)=1\), since squares have even valuation. For \(p=3\),
we have \(65\equiv 2\bmod 3\), which is not a square modulo \(3\). Finally,
if \(p\mid D\), then \(p\nmid 65\) by the assumption
\(\gcd(D,2\cdot3\cdot5\cdot13)=1\), and the condition
\[
\left(\frac{65}{p}\right)=-1
\]
implies that \(65\) is a nonsquare unit in \(\mathbb Q_p^\times\). 

In the remaining case for $p$, the curve $E_1^D$ has good reduction at $p$. In this case, we have
\[
W_p(\phi,1)
=
H^1_{\mathrm{nr}}(\mathbb{Q}_p,E[\phi])
=
H^1_{\mathrm{nr}}(\mathbb{Q}_p,E^D[\phi])
=
W_p(\phi,D).
\]

Since $65 \in \mathrm{Sel}^{\phi}(E_1^D)$, we have $\mathrm{res}_{p}(65) \in W_p(\phi,D)$. Here, we regard \(W_p(\phi,D)\) and
\(W_p(\phi,1)\) as subgroups of the same cohomology group, via the
natural identification
\[
E_1^D[\phi]\simeq E_1[\phi].
\]
Equivalently, we identify
\[
H^1(\mathbb{Q}_p,E_1^D[\phi])
\simeq
H^1(\mathbb{Q}_p,E_1[\phi])
\simeq
\mathbb{Q}_p^\times/\mathbb{Q}_p^{\times 2}.
\]

Therefore, we have $W_p(\phi,D)=\{1, \mathrm{res}_p(65)\}=W_p(\phi,1)$ in $H^1(\Bbb{Q}_p,E[\phi])\cong {\Bbb{Q}_p}^{\times}/{{\Bbb{Q}_p}^{\times}}^{2}$.

It follows that for all primes $p$, we have $W_p(\phi,D)=W_p(\phi,1)$. Thus
\[
\Sel^\phi(E_1^D)=\Sel^\phi(E_1)=\Bbb{Z}/2\Bbb{Z}.
\]
Computation with the Tamagawa ratio (Lemma \ref{taud=1}) shows the same is true for $\Sel^{\hat\phi}(E_2^D)$.

By the same argument as (\ref{shazero}) in the proof of Proposition \ref{twistsha}, we have $\Sha(E^D_1/\Bbb{Q})[2]=\Sha(E^D_2/\Bbb{Q})[2]=0$ and $\mathrm{rank}(E_1^D/\Bbb{Q})=\mathrm{rank}(E_2^D/\Bbb{Q})=0$.

\end{proof}

\subsection{Pairs of elliptic curves sharing the BSD invariants, Kodaira symbols and minimal discriminants}
\vskip\baselineskip

Consider the pair \(E_1: y^2=x^3+25350x^2+2471625x\) and \(E_2: y^2=x^3-50700x^2+632736000x\) described in Example~\ref{example}.

Let \(D>0\) be a square-free integer such that \(D\equiv 1 \bmod{8}\),
\(\gcd(D,2\cdot 3\cdot 5\cdot 13)=1\), and either \(D=1\) or
\(\left(\frac{65}{p}\right)=-1\) for every prime \(p\mid D\).
Assume further that \(D\) is sufficiently large so that
\(E_1^D(\mathbb{Q})_{\mathrm{tors}}\) and
\(E_2^D(\mathbb{Q})_{\mathrm{tors}}\) have no elements of order greater than \(2\).

By Propositions~\ref{sufficient} and~\ref{pair}, for such \(D\) the BSD invariants of \(E_1^D/\mathbb{Q}\) and \(E_2^D/\mathbb{Q}\) coincide; in addition, the Kodaira symbols agree at every prime and \(\Delta_{E_1^D}=\Delta_{E_2^D}\). We provide a table of pairs of non-isomorphic elliptic curves that share these BSD invariants and minimal discriminants.

{\small
\begin{table}[htbp]
\centering
\label{tab:elliptic_curves_38025_b}
\begin{tabular}{lll}
    \toprule
    \textbf{Elliptic curve} & $E^D_1: y^2=x^3+25350Dx^2+2471625D^2x$ & $E^D_2: y^2=x^3-50700Dx^2+632736000D^2x$ \\
    \midrule
 $j$-invariant& $257^3$  & $17^3$  \\  
Minimal discriminant & $3^6\cdot 5^9\cdot 13^9 \cdot D^6$ & $3^6\cdot 5^9\cdot 13^9 \cdot D^6$ \\

    Mordell--Weil group & 
    $\mathbb{Z}/2\mathbb{Z}$ & $\mathbb{Z}/2\mathbb{Z}$ \\
    Regulator & 1 & 1 \\
    Real period & $\dfrac{0.209\cdots}{\sqrt{D}}$ & $\dfrac{0.209\cdots}{\sqrt{D}}$ \\
    Tamagawa number & $2(p=3),2(p=5), 2(p=13), 2(p\mid D)$ & $2(p=3), 2(p=5), 2(p=13), 2(p \mid D)$\\

   Kodaira symbol  & $I_0^*(p=3), III^*(p=5), III^*(p=13)$,  & $I_0^*(p=3), III^*(p=5), III^*(p=13)$ \\

    &  $I_0^* (p \mid D)$ &  $I_0^* (p \mid D)$\\        
 \rowcolor{gray!20}  $\Sha(E^D_i/\mathbb{Q})[2^{\infty}]$ & 0 & 0 \\
 \rowcolor{gray!20} $\Sha(E^D_i/\mathbb{Q})$ & $\dagger$ & $\dagger$ \\    
    \bottomrule
\end{tabular}
\caption{Pairs of elliptic curves sharing the BSD invariants, Kodaira symbols and minimal discriminants}
\end{table}
}

\FloatBarrier

For computations of Tamagawa numbers and Kodaira symbols, see, for example, \cite{aj}. Let \(p\mid D\). Since \(D\) is square-free, we have \(v_p(D)=1\). Moreover, because \(E_1\) has good reduction at \(p\), the quadratic twist \(E_1^D\) has reduction type \(I_0^*\) at \(p\). Hence \(c_{E_1^D/\mathbb{Q}_p}=1+\#\{\alpha\in\mathbb{F}_p\mid P_D(\alpha)=0\}\), where \(P_D(T)=T(T^2+25350uT+2471625u^2)\) and \(u=D/p\). Let \(f(T)=T^2+25350uT+2471625u^2\). Then \(\left(\frac{\operatorname{disc}(f)}{p}\right)=\left(\frac{65}{p}\right)=-1\). Therefore \(f\) has no root in \(\mathbb{F}_p\), while \(T=0\) gives the unique root of \(P_D(T)\). Thus \(c_{E_1^D/\mathbb{Q}_p}=1+1=2\).

For $\dagger$, note that $\Sha(E^D_1/\mathbb{Q})\cong \Sha(E^D_2/\mathbb{Q})$ as abelian groups by Proposition \ref{sha} (2), and they need not be zero as the following example shows.
 \par

\begin{example}\label{const}
 
 BSD conjecture predicts that there exists a pair $(E^D_1, E^D_2)$ that has the same BSD data and nontrivial Tate-Shafarevich groups.

Indeed, 

\[
  \Sha(E^{17}_1/\mathbb{Q}) \cong \Sha(E^{17}_2/\Bbb{Q})\cong  (\mathbb{Z}/3\mathbb{Z})^2,
  \qquad\text{since}\qquad
  \frac{L(E_1^{17},1)}{4\,\Omega_{E_1^{17}}} = 9\] by SageMath.

To find many $D$ such that
\[
\Sha(E_1^D/\mathbb{Q})[3]\cong \Sha(E_2^D/\mathbb{Q})[3]\neq 0,
\]
see \cite{shiga2026} for an attempt in terms of the theory of visibility.

\end{example}

\section{Acknowledgements}
I would like to express my deepest gratitude to Prof.\ Nobuo Tsuzuki for his many constructive discussions and continual encouragement throughout this work. I would like to express my sincere thanks to Prof.\ Christian Wuthrich for insightful discussions at Y-RANT in Nottingham. I am grateful to the anonymous referees for their very careful reading of the manuscript and for their valuable comments.

\FloatBarrier

\end{document}